\newcommand{\const}{\operatorname{const.}}
\newcommand{\diam}{\operatorname{diam}}
\newcommand{\dvol}{\operatorname{dvol}}
\newcommand{\Id}{\operatorname{Id.}}
\newcommand{\Ker}{\operatorname{Ker}}
\newcommand{\R}{{\mathbb R}}
\newcommand{\Rm}{\operatorname{Rm}}
\newcommand{\SL}{\operatorname{SL}}
\newcommand{\SO}{\operatorname{SO}}
\newcommand{\Sol}{\operatorname{Sol}}
\newcommand{\SU}{\operatorname{SU}}
\newcommand{\Tr}{\operatorname{Tr}}
\newcommand{\U}{\operatorname{U}}
\newcommand{\vol}{\operatorname{vol}}
\newcommand{\Z}{{\mathbb Z}}
\numberwithin{equation}{section}
\theoremstyle{plain}
\newtheorem{definition}[equation]{Definition}
\newtheorem{lemma}[equation]{Lemma}
\newtheorem{theorem}[equation]{Theorem}
\newtheorem{proposition}[equation]{Proposition}
\theoremstyle{remark}
\newtheorem{remark}[equation]{Remark}
\newtheorem{example}[equation]{Example}
\begin{document}

\title[Kasner-like regions near crushing singularities]
      {Kasner-like regions near crushing singularities}

\author{John Lott}
\address{Department of Mathematics\\
University of California, Berkeley\\
Berkeley, CA  94720-3840\\
USA} \email{lott@berkeley.edu}

\thanks{Research partially supported by NSF grant
DMS-1810700}
\date{November 30, 2020}

\begin{abstract}
We consider vacuum spacetimes with a crushing singularity.
Under some scale-invariant curvature bounds,
we relate the existence of Kasner-like regions
to the asymptotics of spatial volume densities.
\end{abstract}

\maketitle


\section{Introduction}

This paper is about the initial geometry of an expanding vacuum spacetime.
In Subsection \ref{subsec1.1} of this introduction we give some background about the
problems addressed.  In Subsection \ref{subsec1.2} we describe some of the
techniques used. In Subsection \ref{subsec1.3} we give broad descriptions
of the results.
A reader who is interested in the main results can skip to
Subsection \ref{subsec1.4}.

\subsection{Scenario} \label{subsec1.1}

We are concerned with expanding spacetimes that satisfy the vacuum
Einstein equations.  The word ``expanding'' refers to spatial slices and
its meaning will be clarified. One can study the asymptotic geometry
of such a spacetime in either the future or past directions.
The problems that one studies in the two directions are rather
different.

In the future direction, one can ask, for example, whether the
geometry becomes asymptotically homogeneous; see
\cite{Fischer-Moncrief (2002),Lott (2018)} and references therein.
In the other direction,
Hawking's singularity theorem says that under fairly general assumptions,
an expanding spacetime has a past singularity, in the sense of
an incomplete timelike geodesic
\cite[Theorem 4 on p. 272]{Hawking-Ellis (1973)}.
One basic question, relevant for the strong cosmic censorship question,
is whether the curvature generically blows up; see
\cite{Isenberg (2015)} and references therein. 
We are concerned with a somewhat different question, namely
the nature of the geometry as one approaches the singularity.

As a remark about the physical relevance of considering vacuum spacetimes, under
some assumptions there are heuristic arguments to say that the matter content
is irrelevant for the past asymptotics
\cite[Chapter 4]{Belinski (2017)}. Hence to simplify things, we only
consider vacuum spacetimes with vanishing cosmological constant.

One special but relevant
class of vacuum spacetimes comes from the Milne spacetimes.  In four
dimensions, a Milne spacetime
is a flat spacetime that is a quotient of a forward
lightcone in Minkowski space. Equivalently, it is 
the Lorentzian cone over a three dimensional Riemannian manifold of
constant sectional curvature $-1$.  We define an $(n+1)$-dimensional
spacetime to be of
Milne type if it is a Lorentzian cone over a Riemannian $n$-manifold
whose Ricci tensor is $-(n-1)$ times the Riemannian metric; the
spacetime is then a vacuum spacetime. 

Another special but relevant
class of vacuum spacetimes consists of the Kasner spacetimes, which
exist in any dimension; see Example \ref{4.5}. They are usually
not flat but they
are self-similar, in the sense that they admit a timelike homothetic
Killing vector field. More generally, one has the
spatially homogeneous Bianchi spacetimes.  Among these, the so-called Mixmaster
spacetimes
\cite[Chapter 30.7]{Misner-Thorne-Wheeler (1973)}
have a left-$\widetilde{\SL(2, \R)}$ spatial invariance
(Bianchi VIII)
or a left-$\SU(2)$ spatial invariance (Bianchi IX).

The strongest results about geometric asymptotics, in the future or past,
assume some continuous symmetries.
Going beyond this, one has the
BKL conjectures for the asymptotics of 
a generic vacuum spacetime with an initial singularity,
as one approaches the singularity
\cite{BKL (1970),Belinski (2017)}. The conjectures are loosely formulated
but contain the following points:
\begin{enumerate}
\item The evolution at different spatial points asymptotically decouples.
\item For a given spatial point, the asymptotic
  evolution is governed by the ODE
  of a homogeneous vacuum spacetime of Bianchi type VIII or IX.
    \end{enumerate}

Generic Bianchi VIII and Bianchi IX vacuum spacetimes have been extensively
studied, with rigorous results in
\cite{Beguin (2010),Brehm (2016),Dutilleul (2019),Liebscher-Harterich-Webster-Georgi (2011),Ringstrom (2001)}.  Approaching the singularity, they have time regions of
Kasner-like geometry, with jumps from one Kasner-like geometry to
another.  The jumps are along Bianchi II trajectories and
occur chaotically, when viewed on the right time scale.

There does not seem to be strong evidence either way regarding the BKL
conjectures, although numerics indicate some positive aspects.  In this
paper we focus on the Kasner-like regions.  Rather than considering
generic vacuum spacetimes, we look for conditions that ensure the
existence of Kasner-like regions, and conditions that rule them out.

\subsection{Techniques} \label{subsec1.2}

Our approach to analyzing the future or past behavior of vacuum spacetimes
consists of three features.
\begin{itemize}
\item A way to rescale vacuum
  spacetimes, that allows one to consider blowdown or blowup limits.
  \item A monotonicity
    result.  
\item A way to take a convergent subsequence of a sequence of
  vacuum spacetimes.
\end{itemize}
In combination, one can use these features to prove results about
geometric asymptotics of a spacetime
by contradiction.  One identifies a class of
putative target spacetimes and assumes that
a sequence of (blowdown or blowup) rescalings of the given
vacuum spacetime
do not approach the target class.
One takes a convergent subsequence of the rescalings and uses the
monotonicity result to show that the limit does in fact
lie in the target class,
thereby obtaining a contradiction. 
This general approach
is used to study many geometric flows, such as the Ricci flow.
For the future behavior of vacuum spacetimes, it was
pioneered by Anderson \cite{Anderson (2001)}.

In order to make progress, we make a standard assumption
that the vacuum spacetime in question
has a foliation by
constant mean curvature (CMC) spatial hypersurfaces.
We assume that the spacetime is expanding, meaning that the
spatial slices have negative mean curvatures that increase toward the future.
Examples in which CMC foliations are known to occur come from 
crushing singularities, meaning that in the past
there is a
sequence of compact Cauchy surfaces
whose mean curvatures approach $- \infty$ uniformly
\cite{Eardley-Smarr (1979), Marsden-Tipler (1980)}.
If there is a crushing singularity then
there is a CMC foliation in the past by
compact hypersurfaces, whose mean
curvatures $H$ approach $- \infty$ \cite{Gerhardt (1983)}.

If the spacetime has dimension $n+1$,
define the Hubble time by $t = - \: \frac{n}{H}$. Let $X$ denote the spatial
manifold, with Riemannian metric $h(t)$.
Given a parameter $s > 0$, there is a natural way to rescale the
CMC vacuum spacetime to produce another one; see (\ref{old1.36}).

If $X$ is compact then
Fischer and Moncrief
proved the remarkable monotonicity statement
that $t^{-n} \vol(X, h(t))$ is nonincreasing in $t$
\cite{Fischer-Moncrief (2002)}; a closely related
monotonic quantity was considered by Anderson \cite{Anderson (2001)}.
More precisely,
\begin{equation}
  \frac{d}{dt} \left( t^{-n} \vol(X, h(t)) \right) =
  - \: t^{1-n} \int_X L \left| K^0 \right|^2  \dvol_h,
\end{equation}
where $L$ is the lapse function and
$K^0$ is the traceless second fundamental form of the hypersurface.
One shows that 
$t^{-n} \vol(X, h(t))$ is constant in $t$ if and only if
the vacuum spacetime is a Milne spacetime.  Based on this,
Fischer and Moncrief suggested that most of a spacetime, in the sense
of volume, approaches a Milne spacetime in the future.  Actually,
this is true relative to a limiting spatial volume density
$\dvol_\infty = \lim_{t \rightarrow \infty} t^{-n} \dvol_{h(t)}$.
Related to this, the future stability of Milne spacetimes with
compact spatial slices was shown by Andersson and Moncrief in
\cite{Andersson-Moncrief (2003),Andersson-Moncrief (2011)}.

It is also possible that $\dvol_\infty$ vanishes, in which case the future
asymptotics have different models and were studied in \cite{Lott (2018)}.

Because $t^{-n} \vol(X, h(t))$ is nonincreasing in $t$, it is bounded above
for large $t$ and so it is suitable for studying future behavior.  On the
other hand, to study past behavior, one wants an expression that is
monotonic in the other direction.  In \cite{Lott (2020)} it was shown that
$t^{-1} \vol(X, h(t))$ is nondecreasing in $t$ provided that the spatial
scalar curvature $R$ is nonpositive.  
More precisely,
\begin{equation}
  \frac{d}{dt} \left( t^{-1} \vol(X, h(t)) \right) =
  - \frac{1}{n} \int_X L R \dvol_h,
\end{equation}
If $X$ is a compact $3$-manifold with contractible universal cover, for
example if it is diffeomorphic to a torus, and if $R \le 0$ then
$t^{-1} \vol(X, h(t))$ is constant in $t$ if and only if the vacuum
spacetime is a Kasner spacetime. Thus the expression 
$t^{-1} \vol(X, h(t))$ plays a parallel role to the Fischer-Moncrief
expression $t^{-n} \vol(X, h(t))$, in terms of characterizing model
geometries.

One can refine the monotonicity results to 
statements that are {\em pointwise} with respect to the spatial directions
\cite[(2.16)]{Lott (2018)},\cite[Corollary 5]{Lott (2020)}.
That is, one has monotonicity along any timelike curve that
intersects the spatial hypersurfaces orthogonally.  In this case, one does
not have to assume spatial compactness.

The next issue is about taking a convergent subsequence of a
sequence of spacetimes. In
the static case of Riemannian manifolds, there is a well developed theory
of (pre)compactness under uniform sectional curvature bounds, as
explained in \cite[Chapter 11]{Petersen (2016)}.
An extension to vacuum CMC spacetimes was developed by Anderson, who 
applied it to future asymptotics \cite{Anderson (2001)}.
In order to extract a blowdown limit, i.e. a subsequential limit of a
sequence of rescaled spacetimes,
the
curvature assumption was the scale-invariant condition that the
curvature is $O(t^{-2})$ as
$t \rightarrow \infty$.  We review this material in Subsection \ref{subsec2.2}.
In our case, to extract a blowup limit,
the curvature assumption becomes that the curvature is
$O(t^{-2})$ as $t \rightarrow 0$.  Following Ricci flow terminology, we
call this a type-I vacuum solution. 

\subsection{Objectives} \label{subsec1.3}

The goal of this paper is to give information about the geometry of
an expanding vacuum spacetime near an initial singularity, without any symmetry
assumptions. We assume the existence of a CMC foliation. We also assume
a type-I curvature bound.  We find that
the rate of decay of the spatial volume density leads to information about
the local geometry.  For example,
if the spatial volume density decays at the fastest possible rate as one
approaches the singularity,
namely $t^{n}$, then we show that the local geometry is asymptotically
of the Milne type.

Motivated by the BKL conjectures, we are also interested in characterizing
Kasner-type regions.  As an improvement to \cite{Lott (2020)}, rather than
assuming nonpositive spatial scalar curvature, we just assume that the
spatial scalar curvature is asymptotically nonpositive; this seems to often
be the case.  Under this
assumption, we show that if the spatial volume density
strictly decays at the
slower rate $t^1$ then the local geometry is asymptotically
Kasner-like as one approaches the singularity.

For the Mixmaster solutions that arise in the BKL conjectures, 
the decay of the spatial volume density is not strictly $t^1$.  In fact,
the decay is a bit faster, because of the Bianchi-II
transitions between the Kasner-type regions. In order to cover this
situation, we consider the case when the spatial volume density is
$O(t)$ as $t \rightarrow 0$,
but is not $O(t^{1+\beta})$ for any $\beta > 0$. (An example to have in mind
is $\frac{t}{\log \frac{1}{t}}$.) In this case we show that in a technical
sense, almost all of the time along a trajectory approaching the singularity
is spent in Kasner-like geometries.  

The overall theme is that under some reasonable {\it a priori} assumptions,
a condition about an object of low
regularity, the spatial volume density,
leads to conclusions about the local geometry as one approaches the
singularity.

\subsection{Main results} \label{subsec1.4}

We consider vacuum spacetimes that are diffeomorphic to $(0, t_0] \times X$,
where $X$ is an $n$-dimensional manifold, possibly noncompact.
After performing spatial diffeomorphisms, the
spacetime metric takes the form
$g = - L^2 dt^2 + h(t)$, where $L$ is the lapse function and
$h(\cdot)$ is a family of Riemannian
metrics on $X$.

We assume that each spatial slice has constant mean curvature, so we
have a CMC foliation. As mentioned before, this is the case in the neighborhood
of a crushing singularity.
We are interested in the expanding case, so
we assume that the mean curvature $H$ is
    monotonically increasing in $t$
    and takes all values in
    $(- \infty, H_0)$ for some $H_0 < 0$. We can then use the
    Hubble time given by $t = - \: \frac{n}{H}$.

    In terms of the time evolution, we can alternatively think of
    a vacuum spacetime with a spatial foliation
    as a flow, the Einstein flow.
    Let $|\Rm|_T$ be the pointwise curvature norm defined in (\ref{old1.47}).
    
\begin{definition} \label{1.1}
  A type-I Einstein flow is a CMC Einstein flow
  for which there is some $C < \infty$ so that
  $|\Rm|_T \le C t^{-2}$ for all $t \in (0, t_0]$.
\end{definition}
    
Given an Einstein flow and a parameter $s > 0$,
there is a rescaled Einstein flow ${\mathcal E}_s$, defined in
(\ref{old1.36}), with $s \rightarrow 0$ corresponding to a blowup limit
as one approaches the singularity at time zero.
The inequality in Definition \ref{1.1} is scale-invariant.

Our first result gives a situation where one can rule out the
existence of Kasner-like regions.
It describes the $t \rightarrow 0$ asymptotics at
a point $x \in X$ where the spatial volume density $\dvol_{h(t)}(x)$
has the scale invariant behavior $O(t^n)$. Since we localize around 
$x$, we use the notion of a pointed manifold, meaning a manifold with a
specified basepoint.
Let ${\mathcal M}$ denote the space of pointed Einstein flows
that correspond to Lorentzian cones over pointed Riemannian Einstein
$n$-manifolds with Einstein constant $-(n-1)$.
In what follows, $p$ will range over $[1, \infty)$ and $\alpha$ will
  range over $(0,1)$. The relevant notions of pointed convergence are
  defined in Subsection \ref{subsec2.2}.

\begin{theorem} \label{1.2}
  Suppose that ${\mathcal E}$ is a type-I CMC Einstein flow.
  Fix $x \in X$. 
  Suppose that 
  $\dvol_{h(t)}(x)$ is $O(t^n)$ as $t \rightarrow 0$.
  Then as $s \rightarrow 0$, the rescaled Einstein flows ${\mathcal E}_s$,
  pointed at $x$, approach ${\mathcal M}$ in the pointed weak
  $W^{2,p}$-topology and the pointed $C^{1,\alpha}$-topology.
\end{theorem}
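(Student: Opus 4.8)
The plan is to argue via subsequential limits, following the three-part scheme of rescaling, monotonicity, and compactness outlined above. It suffices to show that every sequence $s_j \to 0$ has a subsequence along which ${\mathcal E}_{s_j}$, pointed at $x$, converges in the pointed weak $W^{2,p}$ and pointed $C^{1,\alpha}$ topologies to an element of ${\mathcal M}$; since ${\mathcal M}$ is closed in these topologies, this forces convergence of the whole family to ${\mathcal M}$. First I would invoke the compactness theory of Subsection \ref{subsec2.2}: because the type-I bound $|\Rm|_T \le C t^{-2}$ is scale-invariant, each rescaled flow ${\mathcal E}_{s_j}$ satisfies the same curvature bound, and the CMC condition supplies the elliptic control on the lapse and (in harmonic coordinates) on the metric needed to extract a pointed limit Einstein flow ${\mathcal E}_\infty$ in the stated topologies. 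The remaining task is to identify ${\mathcal E}_\infty$ as a Lorentzian cone over an Einstein $n$-manifold with Einstein constant $-(n-1)$.

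For this I would use the pointwise refinement of the Fischer--Moncrief monotonicity: along the timelike trajectory through $x$ orthogonal to the slices, the density ratio $t^{-n} \dvol_{h(t)}(x)$ is nonincreasing in $t$, with $t$-derivative equal to $- t^{1-n} (L |K^0|^2)(x)$. The hypothesis that $\dvol_{h(t)}(x)$ is $O(t^n)$ says precisely that this nonincreasing quantity stays bounded above as $t \to 0$; being monotone and bounded, it converges to a finite limit $V_0 \ge 0$. Consequently its total variation over any time window pushed toward the singularity tends to zero, that is, $\int_a^b t^{1-n} (L |K^0|^2)(x) \, dt \to 0$ as the window shrinks to the origin. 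Rewriting this in the rescaled time coordinate of ${\mathcal E}_{s_j}$, where a fixed window in rescaled time corresponds to a window collapsing to the singularity in the original flow and where the integrand is scale-invariant, the same integrated quantity computed for ${\mathcal E}_{s_j}$ over a fixed window converges to zero as $j \to \infty$.

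I would then pass to the limit. The $C^{1,\alpha}$ convergence controls $h$ together with its first derivatives, hence $K^0$ and $L$, so the integral expression passes to ${\mathcal E}_\infty$ and vanishes there over every window. Since $L > 0$, this forces $K^0 \equiv 0$ for the limit flow near the basepoint, for all rescaled times. A CMC vacuum Einstein flow with vanishing traceless second fundamental form is exactly one whose spatial slices are totally umbilic; feeding $K^0 = 0$ into the Hamiltonian constraint gives $R = - \frac{n-1}{n} H^2$, and feeding it into the evolution equations forces self-similar scaling, so that $(X, h(t))$ is homothetic to a fixed Riemannian manifold $(X, h_1)$ with $\Ric(h_1) = -(n-1) h_1$. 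This is precisely a Lorentzian cone over an Einstein manifold with Einstein constant $-(n-1)$, so ${\mathcal E}_\infty \in {\mathcal M}$, which completes the argument.

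The main obstacle is the step of upgrading the time-integrated smallness of $|K^0|^2$ to the pointwise-in-time conclusion $K^0 \equiv 0$ for the limit: one must verify that the scale-invariant entropy production really does become negligible over a full unit window in the rescaled picture, which requires matching the decay rate in the $O(t^n)$ hypothesis against the Jacobian factors produced by the rescaling and confirming that the weak $W^{2,p}$ and $C^{1,\alpha}$ convergence is strong enough to pass the nonlinear integrand $L |K^0|^2$ to the limit. A secondary technical point is ensuring that ${\mathcal E}_\infty$ is a genuine Einstein flow, and not merely a limiting metric, so that the rigidity characterization of the $K^0 = 0$ case applies and yields membership in ${\mathcal M}$ rather than an isolated Einstein slice.
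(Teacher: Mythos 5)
Your overall scheme (rescale, use monotonicity, take a compactness limit, identify the limit) matches the paper's, but the monotonicity step rests on a formula that is false pointwise, and this breaks the identification of the limit. You assert that along the trajectory through $x$ the density ratio satisfies $\frac{d}{dt}\left(t^{-n}\dvol_{h(t)}(x)\right) = -\,t^{1-n}(L|K^0|^2)(x)\,\dvol$. That identity is the \emph{spatially integrated} Fischer--Moncrief formula, where the term $\int_X \triangle_h L \,\dvol_h$ vanishes on a compact slice. Pointwise, tracing (\ref{old1.4}) and using $H=-n/t$ gives instead
\begin{equation*}
\frac{\partial}{\partial t}\left(t^{-n}\dvol_{h(t)}(x)\right) = -\,n\,t^{-n-1}\bigl(1-L(t,x)\bigr)\dvol_{h(t)}(x),
\end{equation*}
which is (\ref{old1.16}) in Hubble time; the relation to $|K^0|^2$ is the elliptic identity $\frac{n}{t^2}(1-L) = -\triangle_h L + L|K^0|^2$ from (\ref{old1.13}), in which the Laplacian term cannot be dropped at a point. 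So what the $O(t^n)$ hypothesis actually yields is $\int_0^{t_0}\bigl(1-L(v,x)\bigr)\frac{dv}{v} < \infty$ (using $L \le 1$ from (\ref{old1.14})), i.e. integrated control on the \emph{lapse} at $x$, not on $|K^0|^2$ at $x$.

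This matters for the next step. Even if one grants some integrated smallness at the single point $x$, in the limit flow you would only get information along the central trajectory $\{(u,x^\infty)\}$, whereas the cone classification you invoke (constraint plus evolution equations forcing $h^\infty(cu)=c^2h^\infty(u)$ and $\Ric(h^\infty(1))=-(n-1)h^\infty(1)$) needs $K^0 \equiv 0$ on a full spatial neighborhood. The missing idea is the strong maximum principle: the inequality $L\le 1$ passes to the limit, the integrated lapse bound forces $L^\infty(u,x^\infty)=1$ for all $u$ (via a disjoint-intervals argument), and then applying the strong maximum principle to the elliptic equation (\ref{old1.13}) on the limit slice propagates $L^\infty \equiv 1$ everywhere, whereupon the same equation gives $(K^\infty)^0 \equiv 0$. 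Without this propagation step your argument cannot bridge from pointwise-at-$x$ data to the spatially global rigidity. A secondary gap: Theorem \ref{1.2} does not assume noncollapsing, but the compactness result (Proposition \ref{old1.55}) you invoke requires noncollapsing at $x$; in the collapsed case the limit lives on an \'etale groupoid, which is handled separately in Subsection \ref{subsect3.4}, and your proposal does not address this.
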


Theorem \ref{1.2} describes the type-I Einstein
flows with the fastest possible volume
shrinkage as $t \rightarrow 0$. To deal with some Einstein flows with a slower
volume shrinkage, 
we introduce another class of special Einstein flows.
Let $K$ denote the second fundamental form, let
$R$ denote the spatial scalar curvature and
  let ${\mathcal K}$ be the collection of pointed expanding type-I CMC
  Einstein flows with
  $R = 0$, $L = \frac{1}{n}$ and $|K|^2 = H^2$, defined for $t \in
  (0, \infty)$.
  Examples of such Einstein flows are the $n$-dimensional
  Kasner flows (Example \ref{4.5}).
  If $X$ is a compact orientable $3$-manifold,
  and there is an aspherical component in the prime decomposition of
  $X$ (for example, if $X$ is diffeomorphic to $T^3$;
  see \cite[\S 6]{Scott (1983)} for definitions),
  then an element of ${\mathcal K}$
  is a Kasner flow in the sense
of Example \ref{4.5}
  \cite[Proposition 9]{Lott (2020)}.
  In particular, it is spatially flat.
  We do not know if this is true more generally, say if $X$ is
  a noncompact $3$-manifold and the spatial slices are complete.
  So we will think of elements of ${\mathcal K}$ as
  Kasner-like Einstein flows.

  \begin{definition}
    The CMC Einstein flow ${\mathcal E}$ has asymptotically nonpositive spatial
    scalar curvature if $\limsup_{t \rightarrow 0} \sup_{x \in X}
    t^2 R(t,x) \le 0$.
    \end{definition}

  The next theorem gives a sufficient condition to ensure that as
  $t \rightarrow 0$, the geometry becomes increasingly Kasner-like.
  
    \begin{theorem} \label{1.4}
      Let ${\mathcal E}$ be a type-I CMC Einstein flow with
      asymptotically nonpositive spatial scalar curvature.
      Suppose in addition that there is a nonnegative function
      $\widehat{R} : (0, t_0] \rightarrow \R$
with $\int_0^{t_0} t^2 \widehat{R}(t) \frac{dt}{t} < \infty$ so that
$R(t,x) \le \widehat{R}(t)$ for all $x \in X$ and $t \in (0, t_0]$.
Fix $x \in X$.  Suppose that
    there is some $c > 0$
    so that 
    $\frac{1}{t} \dvol_{h(t)}(x) \ge \frac{c}{t_0} \dvol_{h(t_0)}$
    for all $t \in (0, t_0]$.
    Then as $s \rightarrow 0$, the rescaled Einstein flows
    ${\mathcal E}_s$, pointed at $x$,
    approach ${\mathcal K}$ in the pointed weak
    $W^{2,p}$-topology
and the pointed $C^{1,\alpha}$-topology.
  \end{theorem}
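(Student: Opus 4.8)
The plan is to show that every subsequential blowup limit of the family $\{{\mathcal E}_s\}$ lies in ${\mathcal K}$; by the compactness theory this is equivalent to the asserted convergence ${\mathcal E}_s\to{\mathcal K}$. The three ingredients are exactly those of Subsection~\ref{subsec1.2}: the type-I rescaling, the pointed compactness of Subsection~\ref{subsec2.2}, and the pointwise volume monotonicity of \cite[Corollary 5]{Lott (2020)}. First I would set up compactness. Since the bound $|\Rm|_T\le Ct^{-2}$ of Definition~\ref{1.1} is scale-invariant, the rescaled flows ${\mathcal E}_s$ satisfy a uniform curvature bound on every compact interval $[a,b]\subset(0,\infty)$, independent of $s$. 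The lower-bound hypothesis on $\frac1t\dvol_{h(t)}(x)$ provides, after rescaling, a uniform lower bound on the spatial volume density at the basepoint at every rescaled time; together with the curvature bound this gives noncollapsing (a lower injectivity-radius bound), so by the pointed weak $W^{2,p}$ and $C^{1,\alpha}$ precompactness of Subsection~\ref{subsec2.2} we may extract a subsequential pointed limit ${\mathcal E}_\infty$, itself a type-I CMC Einstein flow now defined for all $t\in(0,\infty)$. The Hamiltonian constraint $|K|^2=R+H^2$ (with $H^2=n^2t^{-2}$) and the lapse equation $-\Delta_h L+|K|^2L=nt^{-2}$ pass to the limit because the convergence controls these second-order quantities.

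The heart of the matter is to identify ${\mathcal E}_\infty$ as an element of ${\mathcal K}$. Writing $F(t)=t^{-1}\dvol_{h(t)}(x)$, the CMC evolution $\partial_t\dvol_h=-LH\,\dvol_h$ gives the pointwise identity
\[
\frac{d}{dt}\log F(t)=\frac{nL-1}{t}.
\]
When $R\le 0$ the maximum principle applied to the lapse equation yields $L\ge\frac1n$, so $F$ is nondecreasing: this is the pointwise monotonicity. Here $R$ is only asymptotically nonpositive, so I would control the defect by the two hypotheses separately. The maximum principle applied to $u=L-\frac1n$, which satisfies $-\Delta_h u+(R+n^2t^{-2})u=-R/n$, bounds the negative part where $L<\frac1n$ (which forces $R>0$) by
\[
1-nL\le \frac{t^2}{n^2}\,R_+\le \frac{t^2}{n^2}\,\widehat R(t),
\]
so that $\int_0^{t_0}\frac{(1-nL)_+}{t}\,dt\lesssim\int_0^{t_0}t\,\widehat R(t)\,dt=\int_0^{t_0}t^2\widehat R(t)\,\frac{dt}{t}<\infty$. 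This is precisely the role of the integrability hypothesis: it bounds the \emph{decrease} of $F$. The lower-bound hypothesis in turn bounds the \emph{increase}, since $\int_t^{t_0}\frac{(nL-1)_+}{t'}\,dt'\le \log F(t_0)-\log F(t)+\int_0^{t_0}\frac{(1-nL)_+}{t'}\,dt'$ stays finite once $F(t)\ge c\,t_0^{-1}\dvol_{h(t_0)}(x)>0$. Hence $\log F$ has finite total variation on $(0,t_0]$, so $F$ has a finite positive limit as $t\to 0$ and $\int_0^{t_0}\frac{|nL-1|}{t}\,dt<\infty$.

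I would then pass this to the blowup limit. Under the rescaling of (\ref{old1.36}), which sends the basepoint time-scale to $0$, the integral $\int_a^b\frac{|nL-1|}{t}\,dt$ over a fixed rescaled window is a tail of the convergent integral above and so tends to $0$ as $s\to 0$. In the limit this forces $\int_a^b\frac{nL_\infty-1}{t}\,dt=0$ for every window, and since $nL_\infty-1\ge 0$ (the limit has $R_\infty\le 0$ by the asymptotic-nonpositivity hypothesis), we conclude $L_\infty=\frac1n$. The lapse equation then gives $|K_\infty|^2=H^2$, and the Hamiltonian constraint gives $R_\infty=0$; thus ${\mathcal E}_\infty\in{\mathcal K}$. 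As the subsequence was arbitrary, ${\mathcal E}_s\to{\mathcal K}$ in both stated topologies.

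The main obstacle is the second paragraph: making rigorous that the positive part of the spatial scalar curvature contributes a monotonicity defect controlled precisely by $\int_0^{t_0}t\,\widehat R\,dt$. The subtlety is that the relation between $nL-1$ and $R$ is governed by the \emph{global} elliptic lapse equation rather than a pointwise identity, so the maximum-principle estimate must be executed carefully, with attention to the noncompact case (an Omori--Yau-type argument, using that $R+n^2t^{-2}=|K|^2\ge 0$), and uniformly under rescaling. One must also verify that the weak $W^{2,p}$-limit genuinely satisfies $L_\infty=\frac1n$ pointwise, rather than merely in the averaged sense produced by the vanishing window integrals; this is where the $C^{1,\alpha}$-convergence, which controls $L_\infty$ and its first derivatives, is essential.
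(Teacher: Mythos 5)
Your proposal follows the same skeleton as the paper's proof of this theorem (Proposition \ref{3.6} together with Subsection \ref{subsect3.4}): argue by contradiction, extract a subsequential blowup limit, combine the pointwise volume identity (\ref{3.8}) with the maximum-principle lapse bound of Lemma \ref{3.4} to control $\int_0^{t_0} |nL(v,x)-1| \, \frac{dv}{v}$, and then identify the limit. Your bookkeeping in the second paragraph is correct, and your ``vanishing tails'' argument is a clean substitute for the disjoint-interval contradiction in Lemma \ref{upperL}. However, there are two genuine gaps. The first is your claim that the hypothesis $\frac{1}{t} \dvol_{h(t)}(x) \ge \frac{c}{t_0} \dvol_{h(t_0)}(x)$, together with the type-I bound, ``gives noncollapsing (a lower injectivity-radius bound)''. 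That implication is not valid: a volume density is a pointwise ratio of top-degree forms at the single point $x$, and it controls neither normalized ball volumes nor injectivity radius. For instance, the flat metrics $\diag \left( \epsilon^2, \epsilon^{-2/(n-1)}, \dots, \epsilon^{-2/(n-1)} \right)$ on a fixed torus have volume density identically equal to that of the unit flat metric, yet collapse as $\epsilon \rightarrow 0$. Noncollapsing (Definition \ref{2.56}) is precisely the hypothesis that separates Proposition \ref{3.6} from Theorem \ref{1.4}; the compactness result you invoke, Proposition \ref{old1.55}, requires it, and whether it can fail under the theorem's hypotheses is open. The paper removes the hypothesis not by deriving it from the volume bound, but by rerunning the entire argument with limits taken in the category of Einstein flows on \'etale groupoids (Subsection \ref{subsect3.4}). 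As written, your argument at best proves the noncollapsed statement, and even that reduction rests on an invalid step.

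The second gap is in the identification of the limit. The finiteness of $\int_0^{t_0} |nL(v,x)-1| \, \frac{dv}{v}$ comes from the volume hypothesis, which is imposed only at the point $x$; consequently your vanishing window integrals give $L^\infty(u, x^\infty) = \frac{1}{n}$ only along the basepoint worldline, not on all of $X^\infty$. Membership in ${\mathcal K}$ requires $L^\infty = \frac{1}{n}$, $R^\infty = 0$ and $|K^\infty|^2 = H^2$ identically, and your step ``the lapse equation then gives $|K^\infty|^2 = H^2$'' tacitly uses $\triangle_{h^\infty} L^\infty = 0$, which you do not yet know. The missing ingredient is the strong maximum principle, which is the content of the paper's Lemma \ref{3.9}: set $w = L^\infty - \frac{1}{n}$; then $w \ge 0$ everywhere (Lemma \ref{3.4} together with $\lim_{t \rightarrow 0} t^2 \widehat{R}(t) = 0$, passed to the limit), $w(u, x^\infty) = 0$, and (\ref{old1.13}) combined with the constraint (\ref{old1.2}) gives
\begin{equation}
- \triangle_{h^\infty} w + |K^\infty|^2 w = - \: \frac{R^\infty}{n} \ge 0,
\end{equation}
since $R^\infty \le 0$ (your observation, the paper's Lemma \ref{extra}). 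As the zeroth-order coefficient $|K^\infty|^2$ is nonnegative and $w$ vanishes at an interior point, the strong maximum principle (after upgrading $L^\infty(u, \cdot)$ to local $C^{2,\alpha}$-regularity by elliptic regularity, as in the proof of Proposition \ref{3.1}) forces $w \equiv 0$ on each time slice, whence $R^\infty = 0$ and $|K^\infty|^2 = H^2$ everywhere. Without this spatial propagation step the limit flow has not been shown to lie in ${\mathcal K}$, and the desired contradiction does not materialize.
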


    We note that even if there is pointwise convergence to a Kasner solution,
    the Kasner solution can definitely depend on $x$.
    Besides the obvious example of a Kasner solution (Example \ref{4.5}),
    examples of Theorem \ref{1.4} arise from Kantowski-Sachs solutions
    (Example \ref{4.8}), Taub-NUT solutions (Example \ref{4.10}),
    Bianchi VIII NUT solutions (Example \ref{4.11}) and, hypothetically,
    polarized Gowdy spacetimes (Example \ref{4.12}).

    From \cite[Lemma 1]{Lott (2020)}, if the Einstein
    flow has nonpositive spatial
    scalar curvature then the volume is $O(t)$ as $t \rightarrow 0$.
    In this case, Theorem \ref{1.4} was proven in
    \cite[Proposition 17]{Lott (2020)}. Also, if the
    Einstein flow has nonpositive
spatial scalar curvature then an integral version of Theorem \ref{1.4}
    was proven in \cite[Proposition 10]{Lott (2020)}, without
    any further curvature assumptions.
    
    Theorems \ref{1.2} and \ref{1.4} roughly describe two extremes of volume
    shrinkage as $t \rightarrow 0$, namely when the
    spatial volume density goes
    like $t^n$ or $t$.
    Some relevant spacetimes for the BKL conjectures are those
    whose volume densities may be $O(t)$, but are not
    $O \left( t^{1+\beta} \right)$ for any $\beta > 0$.
    The next theorem roughly says that as one approaches the singularity
    in such a spacetime,
    almost all of the time is spent in Kasner-like regions.
    
  \begin{theorem} \label{1.5}
    Suppose that ${\mathcal E}$ is a type-I CMC Einstein flow with asymptotically
    nonpositive spatial scalar curvature.  Fix $x \in X$.  Suppose that
    for each
    $\beta > 0$, 
    $\dvol_{h(t)}(x)$ fails to be $O(t^{1+\beta})$ as $t \rightarrow 0$. Given
    $\epsilon > 0$, let $S_\epsilon$ be the set of $\tau \ge 0$ so that
    ${\mathcal E}_{t_0 e^{- \tau}}$, pointed at $x$,
    is not $\epsilon$-close to ${\mathcal K}$ in the
    pointed $C^{1,\alpha}$-topology.
    Given $N \in \Z^+$, let $F(N)$ be the number of unit intervals
    $\{[k, k+1]\}_{k = 0}^{N-1}$ that have a nonempty intersection with
    $S_\epsilon$.
    Then
    \begin{equation}
      \liminf_{N \rightarrow \infty} \frac{F(N)}{N}
      = 0.
      \end{equation}
  \end{theorem}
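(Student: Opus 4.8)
The plan is to pass to a logarithmic time scale, extract from the volume hypothesis a sequence of times along which a monotone ``Kasner-defect'' has vanishing average, and then use the type-I precompactness to convert distance from $\mathcal{K}$ into a definite amount of defect, after which the statement reduces to a counting estimate.

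First I would set $\tau = \log(t_0/t)$, so that $\mathcal{E}_{t_0 e^{-\tau}}$ is the rescaling ``at time $\tau$'' and $\tau\to\infty$ corresponds to the blowup $t\to 0$. Writing $V(t)=\dvol_{h(t)}(x)$, I consider
\[
\mathcal{G}(\tau) = \log\!\left(\frac{t_0\,V(t)}{t\,V(t_0)}\right),\qquad \mathcal{G}(0)=0.
\]
Using the evolution of the spatial volume density along the orthogonal trajectory through $x$, together with the pointwise monotonicity of \cite[Corollary 5]{Lott (2020)}, $\mathcal{G}$ has a derivative of the form $\frac{d}{d\tau}\mathcal{G}(\tau) = -D(\tau)$, where $D$ is a nonnegative Kasner-defect density at $x$: it is governed in sign by the spatial scalar curvature (nonnegative where $R\le 0$), equals $nL-1$ by the volume evolution, and vanishes exactly for the configurations of $\mathcal{K}$, the constraint and the CMC lapse equation tying $L=\tfrac{1}{n}$, $R=0$ and $|K^0|^2=\tfrac{n(n-1)}{t^2}$ together. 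The asymptotically nonpositive scalar curvature hypothesis gives $D(\tau)\ge -\eta(\tau)$ with $\eta(\tau)\to 0$, so $\mathcal{G}$ is asymptotically nonincreasing and in particular $\limsup_{\tau\to\infty}\mathcal{G}(\tau)/\tau\le 0$.

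Next I would translate the volume hypothesis. Since $V(t)=O(t^{1+\beta})$ is equivalent to $\mathcal{G}(\tau)+\beta\tau$ being bounded above, the assumption that $V$ fails to be $O(t^{1+\beta})$ for every $\beta>0$ forces $\limsup_{\tau\to\infty}\mathcal{G}(\tau)/\tau=0$: if instead this $\limsup$ equaled $-c<0$ then $\mathcal{G}(\tau)\le -\tfrac{c}{2}\tau$ eventually, whence $V=O(t^{1+c/4})$, a contradiction. Hence there is a sequence $T_j\to\infty$ with $\mathcal{G}(T_j)/T_j\to 0$, and therefore
\[
\frac{1}{T_j}\int_0^{T_j} D(\tau)\,d\tau = \frac{-\mathcal{G}(T_j)}{T_j}\longrightarrow 0.
\]
Splitting off the small negative part of $D$ via $D\ge -\eta$ and the Ces\`aro convergence $\frac{1}{T}\int_0^T\eta\to 0$, the same limit holds with $D$ replaced by its positive part $D^+$.

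The hard part is the quantitative compactness step: I claim that for each $\epsilon>0$ there is $\delta=\delta(\epsilon)>0$ so that if $\mathcal{E}_{t_0e^{-\tau^*}}$ is not $\epsilon$-close to $\mathcal{K}$ in the pointed $C^{1,\alpha}$-topology, then $\int_{\tau^*-1}^{\tau^*+1}D^+\,d\tau\ge\delta$. This is proved by contradiction using the type-I precompactness of Subsection \ref{subsec2.2}: a sequence violating the claim would, after passing to a subsequence, converge to a pointed limit Einstein flow on which the time-integrated defect at the basepoint vanishes; by the equality case of the monotonicity (the rigidity underlying the characterization of $\mathcal{K}$), together with the CMC lapse equation propagating $L=\tfrac{1}{n}$ and $R=0$ to a neighborhood, the limit lies in $\mathcal{K}$, contradicting that it is $\epsilon$-far from $\mathcal{K}$. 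I expect this rigidity-plus-propagation argument, and the verification that the defect functional is continuous in the convergence topology, to be the main obstacle. Finally I would count: each unit interval $[k,k+1]$ meeting $S_\epsilon$ contains a bad time $\tau^*$, so $\int_{k-1}^{k+2}D^+\,d\tau\ge\delta$, and since the enlarged intervals cover each point at most three times,
\[
\delta\,F(N)\le 3\int_0^{N+2}D^+(\tau)\,d\tau.
\]
Dividing by $N$ and evaluating along $N=N_j$ chosen near $T_j$, the right-hand side tends to $0$ by the average-defect estimate, so $F(N_j)/N_j\to 0$; as $F(N)/N\ge 0$ this gives $\liminf_{N\to\infty}F(N)/N=0$. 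The restriction to a subsequence (hence $\liminf$ rather than $\lim$) is intrinsic, since the volume hypothesis only supplies good times $T_j$, between which $\mathcal{G}$ may drop and the defect accumulate.
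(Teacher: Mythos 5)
Your overall architecture is sound and, in fact, runs parallel to the paper's proof of Proposition \ref{3.10}: your defect $D = nL - 1$, the translation of the volume hypothesis into vanishing Ces\`aro averages of $D^+$ along a subsequence $T_j \to \infty$, and the covering/counting step are all correct (the paper argues by contradiction from $F(N) \ge cN$, but this is the same contrapositive). The genuine gap is in your ``quantitative compactness step,'' where you fix the window $[\tau^* - 1, \tau^* + 1]$ \emph{independently of} $\epsilon$. In your contradiction argument, vanishing of $\int_{\tau_i^* - 1}^{\tau_i^* + 1} D^+ \, d\tau$ along the sequence only yields, in the pointed limit ${\mathcal E}^\infty$, that $L^\infty(u, x^\infty) \le \frac{1}{n}$ for $u$ in the slab $[e^{-1}, e]$. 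The maximum-principle rigidity (the paper's Lemma \ref{3.9}, using Lemmas \ref{3.4} and \ref{extra}) then gives $L^\infty = \frac{1}{n}$, $R^\infty = 0$, $|K^\infty|^2 = H^2$ \emph{only on that slab}, whereas membership in ${\mathcal K}$ requires these identities for all $u \in (0,\infty)$, and $\epsilon$-closeness to ${\mathcal K}$ (Definition \ref{2.19}) is measured over the time interval $(\epsilon, \epsilon^{-1})$, which strictly contains $[e^{-1}, e]$ as soon as $\epsilon < e^{-1}$. So no contradiction with ``not $\epsilon$-close to ${\mathcal K}$'' is obtained. The ``rigidity-plus-propagation'' you invoke does not close this hole: the lapse equation (\ref{old1.13}) and the strong maximum principle propagate the identities \emph{spatially} at each fixed time, not \emph{temporally} off the slab; temporal propagation would be a unique-continuation statement for the Einstein flow that is not available (compare the paper's admission that it does not even know whether elements of ${\mathcal K}$ must be genuine Kasner flows). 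This is exactly why the paper's Lemma \ref{3.12} uses windows $[\tau - \delta^{-1}, \tau + \delta^{-1}]$ whose length blows up as $\delta \to 0$: in the limiting argument the lapse bound then holds on an exhausting family of slabs, so $L^\infty \le \frac{1}{n}$ on all of $(0,\infty)$ and Lemma \ref{3.9} places ${\mathcal E}^\infty$ in ${\mathcal K}$.

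The repair is cheap and preserves your structure: take the window half-length to be an $\epsilon$-dependent constant $W(\epsilon)$ (obtained from Lemma \ref{3.12}, with Lemma \ref{3.13} converting a single high-lapse time into a definite interval of high lapse, hence a definite contribution $\delta = \delta(\epsilon) > 0$ to $\int D^+$). Your covering argument then goes through with the multiplicity constant $3$ replaced by roughly $2W + 2$; since $W$ is fixed once $\epsilon$ is fixed, dividing by $N_j \approx T_j$ still gives $F(N_j)/N_j \to 0$. Two smaller points: the precompactness you invoke (Proposition \ref{old1.55}) requires noncollapsing at $x$, which Theorem \ref{1.5} does not assume — the paper removes this via Einstein flows on \'etale groupoids in Subsection \ref{subsect3.4}, and your write-up should at least flag this; and your claim should be restricted to $\tau^*$ large, so that the limit flow is defined on all of $(0,\infty)$ (bounded $\tau^*$ contribute only finitely many unit intervals, so this is harmless).
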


  Examples of Theorem \ref{1.5} come from Mixmaster solutions
  of Bianchi type IX (Example \ref{2.55}) and Bianchi type VIII
  (Example \ref{2.79}). 
  One sees in the Mixmaster examples that the particular Kasner-like
  geometries that
  are approached can vary wildly as $\tau \rightarrow \infty$.
  
  Regarding the general
  applicability of Theorems \ref{1.2}, \ref{1.4} and \ref{1.5},
  we do not know an example of a
  vacuum spacetime, with a crushing singularity, that fails to be type-I with
  asymptotically nonpositive spatial scalar curvature. However, we have
  no reason to think that such examples do not exist.
  One feature of Theorems \ref{1.2}, \ref{1.4} and \ref{1.5} is that the volume
  assumptions are pointwise at $x \in X$.
  (More geometrically, we are looking at a trajectory that goes backward in time
  starting from $(t_0, x)$ and is normal to the spatial slices.)
  This is perhaps consistent with
  point (1) of the BKL conjectures, at least for
  Theorems \ref{1.4} and \ref{1.5}, where particle horizons can form.

  The proofs of Theorems \ref{1.2}, \ref{1.4} and \ref{1.5} are based on compactness
  arguments for Einstein flows, as initiated in
  \cite{Anderson (2001)}, and pointwise monotonicity formulas.
  For Theorem \ref{1.2}, the monotonicity formula is from
  \cite{Fischer-Moncrief (2002)}. For Theorems \ref{1.4} and \ref{1.5}, the
  formula is from \cite{Lott (2020)}.

  The structure of the paper is as follows. In Section \ref{sect2} we
  recall background material. In Section \ref{sect3} we
  prove Theorems \ref{1.2}, \ref{1.4} and \ref{1.5}, first under a 
  noncollapsing assumption. In this case, the spaces
  ${\mathcal M}$ and ${\mathcal K}$ consists of Einstein flows in the
  usual sense. In Subsection \ref{subsect3.4} we indicate how to remove the
  noncollapsing assumption, at the price of
  considering Einstein flows on
  a more general type of space. Section \ref{sect4} has examples of
  spacetimes that satisfy the hypotheses of
  Theorems \ref{1.2}, \ref{1.4} and \ref{1.5}, including Mixmaster examples.

  I thank the referees for their comments.
  
\section{CMC Einstein flows} \label{sect2}

In this section we recall some material about CMC Einstein flows,
rescalings and convergent subsequences.    

\subsection{CMC spacetimes} 

\begin{definition} \label{old1.1}
  Let $I$ be an interval in $\R$.
  An Einstein flow
  ${\mathcal E}$ on an $n$-dimensional manifold $X$ is given by a
  family of nonnegative functions $\{L(t)\}_{t \in I}$ on $X$,
  a family of Riemannian metrics $\{h(t)\}_{t \in I}$ on $X$, and a family
  of symmetric covariant $2$-tensor fields $\{K(t)\}_{t \in I}$ on $X$,
  so that if
  $H = h^{ij} K_{ij}$ and $K^0 = K - \frac{H}{n} h$ then
  the constraint equations
  \begin{equation} \label{old1.2}
    R - |K^0|^2 + \left( 1 - \frac{1}{n} \right) H^2 =  0
  \end{equation}
  and
  \begin{equation} \label{old1.3}
    \nabla_i K^i_{\: \: j} - \nabla_j H =  0,
  \end{equation}
  are satisfied, along with the evolution equations
  \begin{equation} \label{old1.4}
    \frac{\partial h_{ij}}{\partial t} = - 2 L K_{ij}
  \end{equation}
  and
  \begin{equation} \label{old1.5}
    \frac{\partial K_{ij}}{\partial t} =  L H K_{ij} - 2 L
    h^{kl} K_{ik} K_{lj} - L_{;ij} + L R_{ij}.
  \end{equation}
\end{definition}

For now, we will assume that all of the data
is smooth.
At the moment, $L$ is unconstrained; it will be determined by
the elliptic equation (\ref{old1.13}) below.
We will generally want $L(t)$ to be positive.

An Einstein flow gives rise to a Ricci-flat Lorentzian metric
\begin{equation} \label{old1.6}
  g = - L^2 dt^2 + h(t)
\end{equation}
on $I \times X$, for which the second fundamental form of the
time-$t$ slice is $K(t)$. On the other hand, given a
Lorentzian metric $g$ on a manifold with a proper time function $t$,
we can write it in the
form (\ref{old1.6}) by using
curves that meet the level sets orthogonally to give diffeomorphisms
between level sets and establish a product
structure.
Letting $K(t)$ be the second fundamental form of
the time-$t$ slice, the metric $g$ is Ricci-flat if and only if
$(L,h,K)$ is an Einstein flow.

\begin{definition} \label{old1.7}
  A CMC Einstein flow is an Einstein flow
  for which $H$ only depends on $t$.
\end{definition}

In the definition of a CMC Einstein flow, we do not assume that $X$ is compact.
We are interested in the expanding case, so
we assume that we have a CMC Einstein flow with $I = (0, t_0]$
    (or $I = (0, t_0)$), and that $H$ is
    monotonically increasing in $t$
    and takes all values in
    $(- \infty, H_0)$ for some $H_0 < 0$.
Important examples arise from crushing singularities as $t \rightarrow 0$,
in which case $X$ is compact by definition.
    
Returning to general expanding CMC Einstein flows,
equation (\ref{old1.5}) gives
\begin{align} \label{old1.13}
  \frac{\partial H}{\partial t} = & - \triangle_h L + LH^2
  + LR \\
  = & - \triangle_h L + L |K^0|^2 + \frac{1}{n} LH^2. \notag
\end{align}
Assuming bounded spatial curvature and bounded $L$ on compact time intervals,
the maximum principle gives
\begin{equation} \label{old1.14}
  L(t)
  \le \frac{n}{H^2} \frac{\partial H}{\partial t}.
\end{equation}

There is a pointwise identity
\begin{equation} \label{old1.16}
  \frac{\partial}{\partial t} \left( (-H)^n \dvol_h \right) =
  (-H)^{n+1} \left( L -  \frac{n}{H^2} \frac{\partial H}{\partial t}
  \right) \: \dvol_h.
\end{equation}
From (\ref{old1.14}),
it follows that $(-H)^n \dvol_{h(t)}$ is pointwise
monotonically nonincreasing
in $t$.

Also,
  \begin{equation} \label{2.20}
  \frac{\partial}{\partial t} \left( (-H) \dvol_h \right) =
  H^{2} \left( L -  \frac{1}{H^2} \frac{\partial H}{\partial t}
  \right) \: \dvol_h.
\end{equation}

\subsection{Rescaling limits} \label{subsec2.2} 

Let ${\mathcal E}$ be an Einstein flow. Let $g$ be the corresponding
Lorentzian metric.
For $s > 0$, the Lorentzian metric $s^{-2} g$ is isometric to
\begin{equation} \label{old1.35}
  g_s = - L^2(su) du^2 + s^{-2} h(su).
\end{equation}
Hence we put
\begin{align} \label{old1.36}
  & L_s(u) =  L(su),
  & h_s(u) = s^{-2} h(su), \: \: \:
  & K_{s,ij}(u) = s^{-1} K_{ij}(su),\\
  & H_s(u) = s H(su),
  & K^0_{s,ij}(u) = s^{-1} K^0_{ij}(su), \: \: \:
  & |K^0|^2_{s}(u) = s^2 K_{ij}(su), \notag \\
  & R_{s,ij}(u) = R_{ij}(su),
  & R_s(u) = s^2 R(su).
  & \notag
\end{align}
The variable $u$ will refer to the time parameter of a rescaled Einstein
flow, or a limit of such.
We write the rescaled Einstein flow as ${\mathcal E}_s$.

Put $e_0 = T = \frac{1}{L} \frac{\partial}{\partial t}$,
a unit timelike vector
that is normal to the level sets of $t$.
Let $\{e_i\}_{i=1}^n$ be an orthonormal basis for $e_0^\perp$. Put
\begin{equation} \label{old1.47}
  |\Rm|_T = \sqrt{\sum_{\alpha, \beta, \gamma, \delta = 0}^n
    R_{\alpha \beta \gamma \delta}^2}.
\end{equation}
Hereafter we assume that our CMC Einstein flows have complete spatial
slices, and that $|\Rm|_T$ is bounded on compact time intervals.

Let ${\mathcal E}^\infty = \left( L^\infty, h^{\infty}, K^{\infty}
\right) $ be a CMC Einstein flow on a pointed $n$-manifold
$\left( X^\infty, x^\infty \right)$,
defined on a time interval
$I^\infty$. 
Take $p \in [1, \infty)$ and $\alpha \in (0,1)$.

\begin{definition} \label{old1.48}
  The Einstein flow ${\mathcal E}^\infty$ is $W^{2,p}$-regular if
  $X^\infty$ is a $W^{3,p}$-manifold,
  $L^\infty$ and $h^\infty$ are locally $W^{2,p}$-regular in space and time, and
  $K^\infty$ is locally $W^{1,p}$-regular in space and time.
\end{definition}
Note that the equations of Definition \ref{old1.1} make sense in this
generality.

Let ${\mathcal E}^{(k)} = \{h^{(k)}, K^{(k)}, L^{(k)} \}_{k=1}^\infty$ be
smooth CMC
Einstein flows on pointed $n$-manifolds
$\{ \left( X^{(k)}, x^{(k)} \right) \}_{k=1}^\infty$,
defined on time intervals $I^{(k)}$.

\begin{definition} \label{old1.49}
  We say that $\lim_{k \rightarrow \infty} {\mathcal E}^{(k)} =
  {\mathcal E}^\infty$ in the pointed weak $W^{2,p}$-topology
  if
  \begin{itemize}
  \item Any compact interval $S \subset I^\infty$ is contained in
    $I^{(k)}$ for large $k$, and
  \item For any compact interval $S \subset I^\infty$ and
    any compact $n$-dimensional manifold-with-boundary $W^\infty \subset X^\infty$
    containing $x^\infty$, for large $k$
    there are pointed time-independent $W^{3,p}$-regular diffeomorphisms
    $\phi_{S,W^\infty,k} : W^\infty \rightarrow W^{(k)}$ (with
    $W^{(k)} \subset X^{(k)}$) so that
    \begin{itemize}
    \item $\lim_{k \rightarrow \infty}
      (\Id \times \phi_{S,W^\infty,k})^* L^{(k)} = L^\infty$ weakly in $W^{2,p}$ on $S \times W^\infty$,
    \item $\lim_{k \rightarrow \infty}
      (\Id \times \phi_{S,W^\infty,k})^* h^{(k)} = h^\infty$ weakly in $W^{2,p}$ on $S \times W^\infty$
      and
    \item $\lim_{k \rightarrow \infty}
      (\Id \times \phi_{S,W^\infty,k})^* K^{(k)} = K^\infty$ weakly in $W^{1,p}$ on $S \times W^\infty$.
    \end{itemize}
  \end{itemize}
\end{definition}

We define pointed (norm) $C^{1,\alpha}$-convergence similarly.

\begin{definition} \label{old1.50}
  Let ${\mathcal S}$ be a collection of pointed CMC
  Einstein flows defined on a time interval $I^\infty$.
  We say that a sequence
  $\{ {\mathcal E}^{(k)} \}_{k=1}^\infty$ of pointed CMC
  Einstein flows approaches ${\mathcal S}$
  as $k \rightarrow \infty$, in the pointed weak $W^{2,p}$-topology, if
  for any subsequence of $\{ {\mathcal E}^{(k)} \}_{k=1}^\infty$, there
  is a further subsequence
  that converges to an element of ${\mathcal S}$
  in the pointed weak $W^{2,p}$-topology.
\end{definition}

\begin{definition} \label{old1.51}
  Let ${\mathcal S}$ be a collection of pointed CMC
  Einstein flows defined on a time
  interval $I^\infty$. We say that a $1$-parameter family
  $\{ {\mathcal E}^{(s)} \}_{s \in (0,s_0]}$ of pointed CMC Einstein
    flows
    approaches
    ${\mathcal S}$ as $s \rightarrow 0$,
    in the pointed weak $W^{2,p}$-topology, if
    for any sequence $\{s_k\}_{k=1}^\infty$ in $(0, s_0]$ with
      $\lim_{k \rightarrow \infty} s_k = 0$, there is a subsequence
      of the Einstein flows $\{ {\mathcal E}^{(s_k)} \}_{k=1}^\infty$
      that converges to an element of ${\mathcal S}$
      in the pointed weak $W^{2,p}$-topology.
\end{definition}

We define ``approaches ${\mathcal S}$'' in the
pointed (norm) $C^{1,\alpha}$-topology similarly.
The motivation for these definitions comes from how one can
define convergence to a compact subset of a metric space, just
using the notion of sequential convergence. In our applications,
the relevant set ${\mathcal S}$ of Einstein flows can be taken to be
sequentially compact.

\begin{definition} \label{2.19}
We say that a pointed CMC
  Einstein flow
${\mathcal E}^1$ is
$\epsilon$-close to a  pointed CMC
  Einstein flow ${\mathcal E}^2$ in the pointed
$C^{1,\alpha}$-topology
if they are both defined on the time interval $(\epsilon, \epsilon^{-1})$ and, up to applying time-independent
pointed diffeomorphisms, 
the metrics are $\epsilon$-close in the $C^{1,\alpha}$-norm on $(\epsilon, \epsilon^{-1}) \times
B_{h_2(1)}(x^{(2)}, \epsilon^{-1})$.
\end{definition}

We don't make a similar definition of closeness for the
pointed weak $W^{2,p}$-topology because the weak topology is not
metrizable.

We now take $t = - \frac{n}{H}$, with $t$ ranging in an interval $(0, t_0]$.

\begin{definition} \label{old1.54}
  A type-I Einstein flow is a CMC Einstein flow
  for which there is some $C < \infty$ so that
  $|\Rm|_T \le C t^{-2}$ for all $t \in (0, t_0]$.
\end{definition}

Let $B_{h(t)}(x,t)$ denote the time-$t$ metric ball of radius $t$ around $x$.

\begin{definition} \label{2.56}
  If ${\mathcal E}$ is a CMC Einstein flow and $x \in X$ then the flow
  is noncollapsing at $x$ as $t \rightarrow 0$
    if $\vol \left( B_{h(t)}(x, t)  \right) \ge v_0 t^n$ for all $t$,
    for some $v_0 > 0$.
    \end{definition}

We do not know examples of crushing singularities for which the Einstein flow
fails to be everywhere noncollapsing.

\begin{proposition} \cite[Proposition 11]{Lott (2020)} \label{old1.55}
  Let ${\mathcal E}$ be a type-I
  Einstein flow on an $n$-dimensional manifold $X$.
  Suppose that it is
  defined on a time-interval $(0, t_0]$ and has complete time slices.
    Suppose that it is noncollapsing at $x \in X$ as $t \rightarrow 0$.
    Given a sequence $s_i \rightarrow 0$,
    after passing to a subsequence,
there is a limit
      $\lim_{i \rightarrow \infty} {\mathcal E}_{s_i} =
      {\mathcal E}^\infty$
      in the pointed weak $W^{2,p}$-topology and the pointed
      $C^{1,\alpha}$-topology. The limit Einstein flow
      ${\mathcal E}^\infty$ is
      defined on the time interval $(0, \infty)$. Its time slices
      $\{(X^\infty, h^\infty(u))\}_{u > 0}$ are complete. Its
      lapse function $L^\infty$ is uniformly bounded below by a
      positive constant.
\end{proposition}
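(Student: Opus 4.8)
\emph{Proof plan.} The plan is to adapt Anderson's spacetime compactness machinery \cite{Anderson (2001)} to the blowup setting, exploiting that both hypotheses --- the type-I curvature bound of Definition \ref{old1.54} and the noncollapsing condition of Definition \ref{2.56} --- are scale-invariant. First I would write each rescaled flow ${\mathcal E}_{s_i}$ in the rescaled time $u$ via (\ref{old1.36}). It is then defined for $u \in (0, t_0/s_i]$, an interval that exhausts $(0, \infty)$ as $s_i \rightarrow 0$, and a direct computation with (\ref{old1.36}) shows that it satisfies the same bound $|\Rm|_T \le C u^{-2}$ and the same volume lower bound $\vol \left( B_{h_{s_i}(u)}(x, u) \right) \ge v_0 u^n$. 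Consequently, on any fixed compact interval $[u_1, u_2] \subset (0, \infty)$ one has a genuine time-independent curvature bound $|\Rm|_T \le C u_1^{-2}$ together with noncollapsing at $x$.

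Second, I would establish uniform spatial regularity on $[u_1, u_2]$. The bounded curvature together with noncollapsing at $x$ yields, by volume comparison and the evolution equation (\ref{old1.4}), noncollapsing on a whole spacetime neighborhood. The Cheeger--Gromov--Anderson harmonic-coordinate theory then provides, for each $u$, charts of a definite coordinate radius in which $h_{s_i}(u)$ has uniform $C^{1,\alpha}$ and weak $W^{2,p}$ bounds. The second fundamental form $K_{s_i}$ is controlled in the corresponding norms by the curvature bound together with the constraints (\ref{old1.2}) and (\ref{old1.3}).

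Third, I would promote this to spacetime control and pass to the limit. The lapse solves the elliptic equation (\ref{old1.13}), which on each slice reads $- \triangle_h L + (R + H^2) L = \frac{\partial H}{\partial t}$, with right-hand side positive and depending only on $t$, and with zeroth-order coefficient $R + H^2 = |K^0|^2 + \frac{1}{n} H^2$ bounded two-sidedly by multiples of $u^{-2}$. The upper bound $L \le \frac{n}{H^2} \frac{\partial H}{\partial t}$ is (\ref{old1.14}), while a De Giorgi--Nash--Moser / Harnack estimate in the harmonic charts gives a uniform positive lower bound for $L_{s_i}$ on compact regions, which survives in the limit and yields the asserted lower bound on $L^\infty$. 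With $L$ controlled, the evolution equations (\ref{old1.4}) and (\ref{old1.5}) furnish uniform bounds on the time derivatives of $h_{s_i}$ and $K_{s_i}$; choosing the charts time-independently, as demanded by Definition \ref{old1.49}, a diagonal argument over an exhaustion of $(0, \infty)$ by compact intervals and of the limit manifold by balls extracts a subsequence converging in the pointed weak $W^{2,p}$- and $C^{1,\alpha}$-topologies. The limiting data satisfy (\ref{old1.2})--(\ref{old1.5}) by continuity, so ${\mathcal E}^\infty$ is a CMC Einstein flow on $(0, \infty)$; completeness of its slices follows since each bounded ball in the limit is a $C^{1,\alpha}$-limit of balls in the complete slices of the ${\mathcal E}_{s_i}$, with no boundary appearing in the limit because of the uniform curvature bound.

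The main obstacle is the uniform positive lower bound on the lapse. Unlike the upper bound, which is immediate from the maximum principle, the lower bound requires a Harnack inequality for $- \triangle_h L + (R + H^2) L = \frac{\partial H}{\partial t}$ that is uniform across the entire sequence; this is exactly where the harmonic-coordinate $W^{2,p}$ control of $h_{s_i}$ and the scale-invariant two-sided bounds on $R + H^2$ are indispensable, and it is the one place where the analysis is genuinely more delicate than the pointwise estimates used elsewhere in the paper. A secondary subtlety is that the type-I bound degenerates as $u \rightarrow 0$, so one cannot include the endpoint $0$; but on every $[u_1, u_2] \subset (0, \infty)$ the estimates are uniform, which is precisely what the exhaustion argument requires to produce a limit on all of $(0, \infty)$.
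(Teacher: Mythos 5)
First, a point of comparison: this paper contains no proof of Proposition \ref{old1.55} at all; it is imported verbatim from \cite[Proposition 11]{Lott (2020)}, whose proof in turn rests on Anderson's controlled-geometry and compactness theory for CMC vacuum spacetimes \cite{Anderson (2001)}. So your proposal must be measured against that route, and at the top level it does follow it: scale invariance of the type-I and noncollapsing hypotheses, slice-wise elliptic theory in harmonic charts, a diagonal argument over compact $u$-intervals, and passage of the constraint and evolution equations and of completeness to the limit.

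The genuine gap is in your second paragraph, and everything after it sits downstream of it. You assert that $K_{s_i}$ is controlled ``by the curvature bound together with the constraints (\ref{old1.2}) and (\ref{old1.3})'', and you later invoke the resulting bound $u^2(R + H^2) \le C$ both for the lapse coefficients and (implicitly) for the spatial curvature bounds that feed the harmonic-coordinate theory and the volume-comparison step. But no algebraic combination of the type-I bound with the constraints yields a pointwise bound on $K$: the Hamiltonian constraint (\ref{old1.2}) ties $|K^0|^2$ to the spatial scalar curvature $R$, while the Gauss equation expresses spatial curvature as spacetime curvature plus terms quadratic in $K$; tracing the Gauss equation and inserting the vacuum condition simply reproduces (\ref{old1.2}) (that is how the constraint is derived) and gives no bound on $K$, and the Codazzi/momentum equations control only derivative combinations of $K$. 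Your scheme ``harmonic charts, then $K$, then the lapse'' is therefore circular: the harmonic-coordinate theory needs spatial curvature and injectivity radius bounds, which need the $L^\infty$ bound $|K| \le C' u^{-1}$, which is exactly what is never established. Producing that bound (together with the two-sided lapse bounds and the $W^{2,p}$ harmonic radius estimate) from the spacetime curvature bound plus noncollapsing is the real content of Anderson's results, obtained there by blow-up/contradiction arguments; it must be quoted as such or reproven, not derived from (\ref{old1.2})--(\ref{old1.3}).

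Relatedly, you misidentify the main obstacle. Once $u^2|K^0|^2$ (equivalently $u^2 R$) is bounded, the uniform positive lower bound on the lapse is the easy, pointwise step: the weak maximum principle applied to (\ref{old1.13}) --- the same argument that gives (\ref{old1.14}), and precisely Lemma \ref{3.4} of this paper with $\widehat{R}(u) = C' u^{-2}$ --- gives $L \ge n/(n^2 + C') > 0$ already for the unrescaled flow, a scale-invariant bound that passes automatically to the rescalings and to the limit. No De Giorgi--Nash--Moser or Harnack machinery is needed (your local comparison argument would also work, and has the minor virtue of being insensitive to noncompactness of the slices), so the step you single out as ``genuinely more delicate'' is in fact routine, while the step you delegate in one sentence is where the theorem lives.
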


\section{Asymptotic geometry} \label{sect3}

In this section we prove Theorems \ref{1.2}, \ref{1.4} and \ref{1.5}. We
initially prove them in the noncollapsing case.  In
Subsection \ref{subsect3.4} we indicate how to remove this assumption.

\subsection{Milne asymptotics} \label{subsect3.1}

Let ${\mathcal M}$ be the collection of pointed Einstein flows that
describe Lorentzian cones over pointed Riemannian Einstein $n$-manifolds with
Einstein constant $-(n-1)$. We take the basepoint for such a flow
to be at time one.
The proof of the next proposition is similar to that of
\cite[Proposition 3.5]{Lott (2018)}.

\begin{proposition} \label{3.1}
  Suppose that ${\mathcal E}$ is a type-I CMC Einstein flow.
  Fix $x \in X$. 
  Suppose that ${\mathcal E}$ is noncollapsing at $x$ and
  $\dvol_{h(t)}(x)$ is  $O(t^n)$ as $t \rightarrow 0$.
  Then as $s \rightarrow 0$, the rescaled Einstein flows ${\mathcal E}_s$,
  pointed at $x$, approach ${\mathcal M}$ in the pointed weak
  $W^{2,p}$-topology and the pointed $C^{1,\alpha}$-topology.
\end{proposition}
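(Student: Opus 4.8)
The plan is to combine the compactness statement of Proposition \ref{old1.55} with the pointwise Fischer--Moncrief monotonicity (\ref{old1.16}), following the scheme of \cite[Proposition 3.5]{Lott (2018)}. The mechanism is the familiar one: a monotone scale-invariant quantity that converges forces the blowup limit to saturate the monotonicity, and the equality case is the model geometry $\mathcal{M}$.

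First I would identify the monotone quantity and its scale behavior. Writing $t = -\frac{n}{H}$, the density $(-H)^n \dvol_h = n^n t^{-n}\dvol_h$ is pointwise nonincreasing in $t$ by (\ref{old1.14})--(\ref{old1.16}), so $Q(t) := t^{-n}\dvol_{h(t)}(x)$ is nondecreasing as $t \to 0$; the hypothesis $\dvol_{h(t)}(x) = O(t^n)$ says precisely that $Q$ is bounded, hence $Q(t) \to V_0 := \lim_{t \to 0} Q(t) < \infty$. A direct computation from (\ref{old1.36}), using $\dvol_{h_s(u)} = s^{-n}\dvol_{h(su)}$, shows that the same quantity for $\mathcal{E}_s$ at Hubble time $u$ equals $u^{-n}\dvol_{h_s(u)}(x) = Q(su)$, and that $H^\infty(u) = -\frac{n}{u}$ in any limit. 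Given $s_i \to 0$, Proposition \ref{old1.55} produces a subsequential limit $\mathcal{E}^\infty$ on $(0,\infty)$ with complete slices and $L^\infty$ bounded below by a positive constant. Since the metrics converge in $C^{1,\alpha}$, the volume densities converge, so $u^{-n}\dvol_{h^\infty(u)}(x^\infty) = \lim_i Q(s_i u) = V_0$ for every $u > 0$; the monotone quantity of the limit flow is \emph{constant} in $u$. The value of this step is that constancy is obtained cheaply, at the level of volume densities, so one never has to pass the defect term $L|K^0|^2$ through a weak limit.

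Applying the identity (\ref{old1.16}) now to $\mathcal{E}^\infty$ itself, constancy forces the defect to vanish at the basepoint: $L^\infty(u,x^\infty) = \frac{n}{(H^\infty)^2}\frac{\partial H^\infty}{\partial u} =: M(u)$ for all $u$, so $x^\infty$ realizes the maximum-principle bound (\ref{old1.14}). To globalize this I would set $w = M(u) - L^\infty$, so $w \ge 0$ with $w(u,x^\infty) = 0$; rewriting (\ref{old1.13}) gives
\[
  \triangle_{h^\infty} w - \frac{(H^\infty)^2}{n}\, w = - L^\infty |K^0|^2 \le 0 .
\]
Since the zeroth-order coefficient $\frac{(H^\infty)^2}{n}$ is nonnegative and the nonnegative supersolution $w$ attains its minimum $0$ at the interior point $x^\infty$, the strong maximum principle gives $w \equiv 0$ on each slice. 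Feeding this back into the displayed identity yields $L^\infty |K^0|^2 \equiv 0$, hence $K^0 \equiv 0$ on all of $X^\infty$ because $L^\infty > 0$. Totally umbilic slices together with the constraint (\ref{old1.2}) and the evolution equation (\ref{old1.4}) then force $h^\infty(u)$ to be a fixed Einstein metric of Einstein constant $-(n-1)$, conformally rescaled in $u$; that is, $\mathcal{E}^\infty \in \mathcal{M}$. As the sequence $s_i \to 0$ was arbitrary, $\mathcal{E}_s$ approaches $\mathcal{M}$ in both topologies.

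I expect the main obstacle to be regularity. Proposition \ref{old1.55} only guarantees a $W^{2,p}$-regular limit, whereas the strong maximum principle and the clean use of (\ref{old1.16}) want a classical (at least $C^2$-in-space) solution. I would dispose of this by invoking elliptic regularity for the CMC lapse equation (\ref{old1.13}) together with the standard smoothing of vacuum Einstein flows, so that $\mathcal{E}^\infty$ is in fact smooth and the argument above applies verbatim. A secondary point to verify is that the convergence of volume densities is genuinely pointwise in $u$ at the basepoint, which rests on the fact that $C^{1,\alpha}$-convergence of $h$ controls $\dvol_h$ and that basepoints are matched under the diffeomorphisms of Definition \ref{old1.49}.
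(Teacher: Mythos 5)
Your proposal is correct and follows essentially the same route as the paper: compactness via Proposition \ref{old1.55}, the pointwise Fischer--Moncrief monotonicity (\ref{old1.16}) to force $L^\infty(\cdot,x^\infty)=1$, elliptic regularity plus the strong maximum principle applied to (\ref{old1.13}) to upgrade this to $L^\infty \equiv 1$ and $(K^\infty)^0 \equiv 0$, and the equality-case analysis identifying the limit as a Lorentzian cone over an Einstein manifold with Einstein constant $-(n-1)$. The only differences are cosmetic: the paper derives $L^\infty(\cdot,x^\infty)=1$ from the finiteness of $\int_0^{t_0}(1-L(v,x))\,\frac{dv}{v}$ via a disjoint-time-intervals contradiction rather than by passing the constant limit of $t^{-n}\dvol_{h(t)}(x)$ to the blowup as you do, and in your final step the full Einstein condition $R_{ij}[h^\infty(1)]=-(n-1)\,h^\infty(1)_{ij}$ actually comes from the evolution equation (\ref{old1.5}) together with (\ref{old1.4}), not from the constraint (\ref{old1.2}), which by itself only pins down the scalar curvature.
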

\begin{proof}
  Suppose that the claim is not true.  Let $\{s_i\}_{i=1}^\infty$ be a
  sequence with $\lim_{i \rightarrow \infty} s_i = 0$ such that
  no subsequence of $\{{\mathcal E}_{s_i}\}_{i=1}^\infty$ converges to an
  element of ${\mathcal M}$ in the given topologies.
From (\ref{old1.14}), we have $L \le 1$.
  From (\ref{old1.16}),
  $t^{-n} \dvol_{h(t)}(x)$ is monotonically nonincreasing in $t$, and
  \begin{equation}
    \log \frac{t^{-n} \dvol_{h(t)}(x)}{t_0^{-n} \dvol_{h(t_0)}(x)} =
    n \int_t^{t_0} (1-L(v,x)) \frac{dv}{v}.
    \end{equation}
  By assumption,   $t^{-n} \dvol_{h(t)}(x)$ is uniformly bounded above, so
  \begin{equation} \label{3.3}
    \int_0^{t_0} (1-L(v,x)) \frac{dv}{v} < \infty.
    \end{equation}

  From Proposition \ref{old1.55},
  after passing to a subsequence, we can assume that
  $\lim_{i \rightarrow \infty} {\mathcal E}_{s_i} = {\mathcal E}^\infty$
  for a pointed CMC Einstein flow ${\mathcal E}^\infty$.
  As the inequality $L \le 1$ passes to the limit, we
  know that $L^\infty \le 1$.
  We claim that
  $L^\infty(u,x^\infty) = 1$ for all $u \in (0, \infty)$.
  If not then $L^\infty(\widehat{u},x^\infty) \le 1 - \delta$ for some
  $\delta > 0$ and 
  $\widehat{u} \in (0, \infty)$.
  By continuity, there is some $\mu > 0$ so that
  $L^\infty(\widehat{u} e^\sigma, x^\infty) \le 1 - \frac{\delta}{2}$ for all
  $\sigma \in [ -\mu,\mu]$. 
  Then for sufficiently large $i$, we know that
  $L(s_i \widehat{u} e^\sigma, x) \le 1 - \frac{\delta}{4}$ for all
  $\sigma \in [-\mu, \mu]$.  After passing to a
  subsequence, we can assume that the intervals
  $\{[s_i \widehat{u} e^{-\mu},
  s_i \widehat{u} e^{\mu}]\}_{i=1}^\infty$ are disjoint. We obtain a
    contradiction to
  (\ref{3.3}).
 
  Thus $L^\infty(u,x^\infty) = 1$ for all $u \in (0, \infty)$. Equation
  (\ref{old1.13})
  (with $t$ replaced by $u$), along with elliptic regularity, the fact that
  $h^\infty$ is locally
$C^{1,\alpha}$-regular and the fact that $(K^\infty)^0$
is locally $C^\alpha$-regular, implies that $L^\infty(u, \cdot)$
is locally $C^{2,\alpha}$-regular.
We can apply the strong maximum principle to
(\ref{old1.13}) on $X^\infty$
to obtain that $L^\infty = 1$ and $(K^\infty)^0 = 0$.
As $K^\infty(u) = - \frac{1}{u} h^\infty(u)$, it follows from
(\ref{old1.4}) that $h^\infty(cu) = c^2 h^\infty(u)$.
Then (\ref{old1.5}) implies that
$h^\infty(1)$ is an Einstein manifold with Einstein constant $-(n-1)$.
Hence there is a
subsequence of $\{{\mathcal E}_{s_i}\}_{i=1}^\infty$ that converges to an
element of ${\mathcal M}$, which is a contradiction. 
\end{proof}

\subsection{Kasner-like asymptotics} \label{subsect3.2}

\begin{lemma} \label{3.4}
Given a function $\widehat{R}$ of $t$,
  if $R(t,x) \le \widehat{R}(t)$ for all $x \in X$ then
  $L(t,x) \ge \frac{n}{n^2+t^2 \widehat{R}(t)}$ for all $x \in X$,
  as long as the denominator is positive.
  \end{lemma}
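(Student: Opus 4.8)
The plan is to read (\ref{old1.13}) as an elliptic equation for $L$ on each spatial slice and apply the minimum principle, in direct parallel with the derivation of the upper bound (\ref{old1.14}). Since the flow is CMC with $H = - \frac{n}{t}$, the quantities $\frac{\partial H}{\partial t} = \frac{n}{t^2}$ and $H^2 = \frac{n^2}{t^2}$ depend only on $t$. Using the first form of (\ref{old1.13}), namely $\frac{\partial H}{\partial t} = - \triangle_h L + L H^2 + L R$, the equation at fixed $t$ becomes
\[
  \triangle_h L = L \left( \frac{n^2}{t^2} + R \right) - \frac{n}{t^2},
\]
a linear elliptic equation for $L(t, \cdot)$ whose zeroth-order coefficient carries the spatial function $R$.

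First I would argue at a spatial minimum of $L(t, \cdot)$. There $\triangle_h L \ge 0$ (with the convention $\triangle_h = \operatorname{div} \operatorname{grad}$ that is fixed by the derivation of (\ref{old1.14})), so evaluating the displayed equation at the minimizing point gives $\frac{n}{t^2} \le L \left( \frac{n^2}{t^2} + R \right)$. Now $L \ge 0$ (the lapse is nonnegative), and $R \le \widehat{R}(t)$ by hypothesis, so multiplying $R \le \widehat{R}(t)$ by $L \ge 0$ yields
\[
  \frac{n}{t^2} \le L \left( \frac{n^2}{t^2} + R \right) \le L \left( \frac{n^2}{t^2} + \widehat{R}(t) \right).
\]
Provided $\frac{n^2}{t^2} + \widehat{R}(t) > 0$, i.e.\ the denominator $n^2 + t^2 \widehat{R}(t)$ is positive, I may divide and obtain $L \ge \frac{n}{n^2 + t^2 \widehat{R}(t)}$ at the minimizing point. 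Because this is the spatial minimum of $L(t, \cdot)$, the same lower bound then holds at every $x \in X$. (As a byproduct the first inequality forces the minimum value to be strictly positive, since $\frac{n}{t^2} > 0$.) Notably, the constraint (\ref{old1.2}) is not needed for this chain, only the sign of $L$.

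The main obstacle is that $X$ need not be compact, so the spatial minimum of $L(t, \cdot)$ may fail to be attained. I would resolve this exactly as in the proof of (\ref{old1.14}): the standing assumption that $|\Rm|_T$ is bounded on compact time intervals bounds the Ricci curvature of $h(t)$ from below, and $L \ge 0$ bounds $L$ from below, so the Omori--Yau maximum principle applies to produce a sequence $x_j$ with $L(t, x_j) \to \inf_X L(t, \cdot)$ and $\triangle_h L(t, x_j) \ge - \frac{1}{j}$. Feeding this into the displayed equation gives $L(t, x_j) \left( \frac{n^2}{t^2} + \widehat{R}(t) \right) \ge L(t, x_j) \left( \frac{n^2}{t^2} + R(t, x_j) \right) \ge \frac{n}{t^2} - \frac{1}{j}$, and letting $j \to \infty$ recovers $\inf_X L(t, \cdot) \ge \frac{n}{n^2 + t^2 \widehat{R}(t)}$, completing the argument.
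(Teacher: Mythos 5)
Your proof is correct and takes essentially the same approach as the paper: the paper's entire proof of this lemma is the single sentence that it ``follows from applying the weak maximum principle to (\ref{old1.13})'', which is precisely your argument at a spatial minimum of $L(t,\cdot)$, combined with $L \ge 0$ and $R \le \widehat{R}(t)$. Your write-up is in fact more detailed than the paper's, since it makes explicit the noncompact case (via Omori--Yau), which the paper leaves implicit under its standing completeness and bounded-curvature assumptions.
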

  \begin{proof}
    This follows from applying the weak maximum principle to
    (\ref{old1.13}).
  \end{proof}

  Let ${\mathcal K}$ be the collection of pointed
  expanding CMC Einstein flows with
  $R = 0$, $L = \frac{1}{n}$ and $|K|^2 = H^2$, defined for $t \in
  (0, \infty)$. We take the basepoint for such a flow
to be at time one.
  
  \begin{proposition} \label{3.6}
    Let ${\mathcal E}$ be a type-I CMC Einstein flow
    with asymptotically nonpositive spatial scalar curvature.
    Suppose in addition that there is a
    nonnegative function $\widehat{R} : (0, t_0] \rightarrow
\R$
with $\int_0^{t_0} t^2 \widehat{R}(t) \frac{dt}{t} < \infty$ so that
$R(x,t) \le \widehat{R}(t)$ for all $x \in X$ and $t \in (0, t_0]$.
Fix $x \in X$.  Suppose that
    ${\mathcal E}$ is noncollapsing at $x$ and there is some $c > 0$
    so that 
    $\frac{1}{t} \dvol_{h(t)}(x) \ge \frac{c}{t_0} \dvol_{h(t_0)}$
    for all $t \in (0, t_0]$.
    Then as $s \rightarrow 0$, the rescaled Einstein flows
    ${\mathcal E}_s$, pointed at $x$, approach ${\mathcal K}$ in the pointed weak
    $W^{2,p}$-topology 
and the pointed $C^{1,\alpha}$-topology.
  \end{proposition}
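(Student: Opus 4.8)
The plan is to mirror the proof of Proposition \ref{3.1}, replacing the Fischer--Moncrief density $(-H)^n \dvol_h$ by the density $(-H) \dvol_h$ and the limiting lapse value $L = 1$ by $L = \frac{1}{n}$. First I would record the pointwise logarithmic derivative of $f(t) := \frac{1}{t} \dvol_{h(t)}(x)$. Writing $H = - \frac{n}{t}$ and using (\ref{2.20}), a direct computation gives
\[
  \frac{\partial}{\partial t} \log f = \frac{n}{t} \left( L(t,x) - \frac{1}{n} \right).
\]
Integrating and invoking the volume hypothesis $f(t) \ge c f(t_0)$ shows that $\int_t^{t_0} \left( L(v,x) - \frac{1}{n} \right) \frac{dv}{v}$ is bounded above, uniformly in $t$. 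By Lemma \ref{3.4} together with the integrability of $t^2 \widehat{R}(t)$, the nonnegative function $g(v) := \left( L(v,x) - \frac{1}{n} \right) + \frac{v^2 \widehat{R}(v)}{n^3}$ then satisfies $\int_0^{t_0} g(v) \frac{dv}{v} < \infty$. This is the exact analogue of the bound (\ref{3.3}) used for Milne asymptotics.

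Next I would argue by contradiction as in Proposition \ref{3.1}: choose $s_i \rightarrow 0$ with no subsequence of $\{ {\mathcal E}_{s_i} \}$ converging to ${\mathcal K}$, and pass to a limit ${\mathcal E}^\infty$ via Proposition \ref{old1.55}. The finiteness of $\int_0^{t_0} g \frac{dv}{v}$, via the disjoint-intervals argument of Proposition \ref{3.1}, forces $L^\infty(u,x^\infty) \le \frac{1}{n}$ for all $u$. Separately, the hypothesis of asymptotically nonpositive spatial scalar curvature passes to the limit --- most cleanly by expressing $R^\infty$ through the constraint (\ref{old1.2}) as $|(K^\infty)^0|^2 - \left( 1 - \frac{1}{n} \right) (H^\infty)^2$, which is continuous in the limiting regularity --- to give $R^\infty \le 0$. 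Applying the maximum-principle argument of Lemma \ref{3.4} to ${\mathcal E}^\infty$ with $R^\infty \le 0$ (i.e. with $\widehat{R}^\infty = 0$) then yields the reverse inequality $L^\infty \ge \frac{1}{n}$ everywhere, so that in particular $L^\infty(u,x^\infty) = \frac{1}{n}$.

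Finally, since $L^\infty \ge \frac{1}{n}$ with equality at the basepoint, the basepoint realizes the global minimum of $L^\infty$. Writing $w = L^\infty - \frac{1}{n} \ge 0$, equation (\ref{old1.13}) in the limit reads $\triangle_{h^\infty} w = \frac{n^2}{u^2} w + L^\infty R^\infty$ with $L^\infty R^\infty \le 0$, so $w$ is a nonnegative supersolution of $\triangle_{h^\infty} - \frac{n^2}{u^2}$ attaining an interior zero; the strong minimum principle gives $w \equiv 0$, i.e. $L^\infty \equiv \frac{1}{n}$. Feeding this back into (\ref{old1.13}) forces $R^\infty = 0$, whence (\ref{old1.2}) gives $|K^\infty|^2 = (H^\infty)^2$, and (\ref{old1.4})--(\ref{old1.5}) show ${\mathcal E}^\infty \in {\mathcal K}$, contradicting the choice of $s_i$.

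The main obstacle I anticipate lies in the two maximum-principle steps on the possibly noncompact complete limit $X^\infty$. Establishing the global lower bound $L^\infty \ge \frac{1}{n}$ requires a maximum principle for complete manifolds (for instance Omori--Yau), which is available because the type-I bound passes to the limit and so bounds the curvature, hence the Ricci curvature, of $X^\infty$. One must also check that the low regularity of the limit ($h^\infty \in C^{1,\alpha}$, $K^\infty \in C^\alpha$) still permits the elliptic bootstrapping --- via (\ref{old1.13}) --- that renders $L^\infty$ smooth enough to apply Hopf's strong minimum principle, exactly as in the treatment of $L^\infty$ in Proposition \ref{3.1}.
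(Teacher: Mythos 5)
Your proof is correct, and its skeleton is the same as the paper's: argue by contradiction, extract a limit ${\mathcal E}^\infty$ via Proposition \ref{old1.55}, use the identity (\ref{3.8}) coming from (\ref{2.20}) together with Lemma \ref{3.4} to show that $\int_0^{t_0}\bigl( L(v,x) - \tfrac{1}{n} \bigr)\frac{dv}{v}$ converges, run the disjoint-intervals argument to get $L^\infty(u,x^\infty) \le \tfrac1n$, establish $R^\infty \le 0$ and $L^\infty \ge \tfrac1n$, and finish with the strong maximum principle applied to (\ref{old1.13}) to force $L^\infty \equiv \tfrac1n$, $R^\infty = 0$, $|K^\infty|^2 = H^2$. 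You deviate in two places, and the comparison is instructive. For $R^\infty \le 0$, the paper (Lemma \ref{extra}) tests the weakly $L^p$-convergent scalar curvatures against nonnegative compactly supported functions; your route through the constraint (\ref{old1.2}), which exhibits $R^\infty$ as a continuous quantity so that the pointwise upper bound from asymptotic nonpositivity passes directly to the limit, is equally valid and arguably cleaner. The more significant difference is how you obtain $L^\infty \ge \tfrac1n$: you run the maximum-principle argument of Lemma \ref{3.4} on the limit flow itself, which is exactly what forces you to confront a global (Omori--Yau type) maximum principle on the complete, possibly noncompact, low-regularity slices of $X^\infty$ --- the obstacle you flag at the end. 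The paper avoids this entirely: Lemma \ref{3.4} is applied to the original smooth flow, where the weak maximum principle is unproblematic, giving $L(t,\cdot) \ge n/(n^2 + t^2\widehat{R}(t))$; since asymptotic nonpositivity allows one to arrange $\lim_{t \to 0} t^2 \widehat{R}(t) = 0$, the rescaled bound $L_s(u,\cdot) = L(su,\cdot) \ge n/(n^2 + (su)^2\widehat{R}(su))$ passes to the limit as $L^\infty \ge \tfrac1n$ everywhere, with no PDE argument on $X^\infty$ at all. The only maximum principle then needed on the limit is the strong (Hopf) one at the basepoint, which is purely local and is justified by the same elliptic bootstrap to local $C^{2,\alpha}$ regularity that you and Proposition \ref{3.1} both invoke. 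So your anticipated obstacle is genuine for your ordering of the steps, but it disappears if you prove the lower bound on the lapse before passing to the rescaling limit rather than after.
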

  \begin{proof}
    Suppose that the claim fails.  Then there is a sequence
    $s_i \rightarrow 0$ with the property that no subsequence
    of ${\mathcal E}_{s_i}$
    converges to an element of ${\mathcal K}$ in the given topologies.

    After passing to a subsequence, we can assume that
    $\lim_{i \rightarrow \infty} {\mathcal E}_{s_i} = {\mathcal E}^\infty$
    for a CMC Einstein flow ${\mathcal E}^\infty$.
\begin{lemma} \label{upperL}
    $L^\infty(u, x^\infty) \le \frac{1}{n}$
  for all $u \in (0, \infty)$.
\end{lemma}
\begin{proof}
    Suppose that
    $L^\infty(u, x^\infty)
    \ge \frac{1}{n} + \delta$ for some $u \in (0, \infty)$
    and some $\delta > 0$. By continuity, there is
    some $\mu$ so that $L^\infty(u e^{\sigma}, x^\infty)
    \ge \frac{1}{n} + \frac12
    \delta$ for all $\sigma \in [- \mu, \mu]$. Then for large $i$,
    we know that
    $L(s_i u e^{\sigma}, x) \ge \frac{1}{n} + \frac14
    \delta$ for all $\sigma \in [- \mu, \mu]$. After passing to a
    subsequence, we can assume that the intervals
    $\{[s_i u e^{- \mu}, s_i u e^{\mu}]\}_{i=1}^\infty$ are disjoint.
    From Lemma \ref{3.4},
    \begin{equation}
      L(v,x) - \frac{1}{n} \ge \frac{n}{n^2+v^2 \widehat{R}(v)} - \frac{1}{n} =
      - \: \frac{v^2 \widehat{R}(v)}{n(n^2 + v^2 \widehat{R}(v))} \ge
      - \: \frac{v^2 \widehat{R}(v)}{n^3}.
    \end{equation}
    Hence the negative part of $L(v,x) - \frac{1}{n}$ is integrable
    with respect to $\frac{dv}{v}$. 
    It follows that
    \begin{equation}
      \int_0^{t_0} \left( L(v,x) - \frac{1}{n} \right) \frac{dv}{v} =
      \infty.
      \end{equation}
However, 
    from (\ref{2.20}), we have
    \begin{equation} \label{3.8}
      \log
      \frac
          {\frac{1}{t_0} \dvol_{h(t_0)}(x)}
        {\frac{1}{t} \dvol_{h(t)}(x)}
           = n \int_t^{t_0} \left( L(v,x) - \frac{1}{n} \right)
            \frac{dv}{v}.
    \end{equation}
    By our assumptions, the left-hand side of (\ref{3.8}) is bounded as
    $t \rightarrow 0$.
    This is a contradiction, so $L^\infty(u,x^\infty) \le \frac{1}{n}$ for all
    $u \in (0, \infty)$.
\end{proof}

\begin{lemma} \label{extra}
  $R^\infty \le 0$.
\end{lemma}
\begin{proof}
  Note that $R^\infty \in
  L^p_{loc}((0, \infty) \times X^\infty)$.
  Let $f$ be a compactly supported nonnegative
  continuous function on $(0, \infty) \times X^\infty$.
  Choose a compact interval $S \subset (0, \infty)$ and a compact
  $n$-dimensional manifold-with-boundary $W^\infty \subset X^\infty$
  so that the support of $f$ is contained in $S \times W^\infty$.
  With reference to Definition \ref{old1.49}, the weak $L^p$-convergence of
  scalar curvature gives
  \begin{equation}
    \int_S \int_{X^\infty} f R^\infty \dvol_{h^\infty(u)} u^2 du = \\
 \lim_{i \rightarrow \infty}
\int_S \int_{X^\infty} f \: (\Id \times \phi_{S,W^\infty, i})^* R_{s_i} \: 
\dvol_{h^\infty(u)} u^2 du.
  \end{equation}

  Since ${\mathcal E}$ has asymptotically nonpositive spatial scalar
  curvature, we can assume that
  $\lim_{t \rightarrow 0} t^2 \widehat{R}(t) = 0$.
  Now
  \begin{align}
& \int_S \int_{X^\infty} f \: (\Id \times \phi_{S,W^\infty, i})^* R_{s_i} \:
    \dvol_{h^\infty(u)} u^2 du \le \\
    &
\left( \int_S \int_{X^\infty} f \: 
\dvol_{h^\infty(u)} du \right) \max_{u \in S} s_i^2 u^2 \widehat{R}(s_i u).
\notag
\end{align}
Hence
  \begin{equation}
    \int_S \int_{X^\infty} f R^\infty \dvol_{h^\infty(u)} u^2 du \le 0
  \end{equation}
  for all such $f$, which proves the lemma.
\end{proof}
    
    \begin{lemma} \label{3.9}
      ${\mathcal E}^\infty \in {\mathcal K}$.
    \end{lemma}
    \begin{proof}
      From Lemma \ref{3.4} and the asymptotic nonpositive spatial scalar
      curvature, we know that
$L^\infty(u,x^\infty) \ge \frac{1}{n}$ for all
      $u \in (0, \infty)$.
      Then Lemma \ref{upperL} implies that
      $L^\infty(u,x^\infty) = \frac{1}{n}$  for all
      $u \in (0, \infty)$.. 
Fix $u$.
      By the same argument as in the proof of Proposition \ref{3.1},
      we know that
        $L^\infty(u,\cdot)$ is locally $C^{2,\alpha}$-regular.  Then we
        can apply the strong maximum principle to (\ref{old1.13}) to conclude
        that $L^\infty = \frac{1}{n}$. From (\ref{old1.13}) again, we obtain
        $R^\infty = 0$.
        (Note that from the constraint equation (\ref{old1.2}), $R^\infty$ is
        locally H\"older-continuous.) Then (\ref{old1.2}) gives
        $|K^\infty|^2 = H^2$. Thus
        ${\mathcal E}^\infty \in {\mathcal K}$.
      \end{proof}
    We have found a subsequence of $\{ {\mathcal E}_{s_i} \}_{i=1}^\infty$
    that converges to an element of ${\mathcal K}$, which is a contradiction.
This proves the proposition.
  \end{proof}

\subsection{Kasner-like time intervals}  

  \begin{proposition} \label{3.10}
    Suppose that ${\mathcal E}$ is a type-I CMC Einstein flow with asymptotically
    nonpositive spatial scalar curvature.  Fix $x \in X$.  Suppose that
    ${\mathcal E}$ is noncollapsing at $x$ and for each
    $\beta > 0$, 
    $\dvol_{h(t)}(x)$ fails to be $O(t^{1+\beta})$ as $t \rightarrow 0$.
    Given
    $\epsilon > 0$, let $S_\epsilon$ be the set of $\tau \ge 0$ so that
    ${\mathcal E}_{t_0 e^{- \tau}}$, pointed at $x$,
    is not $\epsilon$-close to ${\mathcal K}$ in the
    pointed $C^{1,\alpha}$-topology.
    Given $N \in \Z^+$, let $F(N)$ be the number of unit intervals
    $\{[k, k+1]\}_{k = 0}^{N-1}$ that have a nonempty intersection with
    $S_\epsilon$.
    Then
    \begin{equation}
      \liminf_{N \rightarrow \infty} \frac{F(N)}{N}
      = 0.
      \end{equation}
  \end{proposition}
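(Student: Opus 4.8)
The plan is to reduce the statement to a one–dimensional bookkeeping problem for the function $\ell(\tau) = L(t_0 e^{-\tau}, x)$ along the normal trajectory through $x$, using the same pointwise monotonicity formula (\ref{3.8}) that drives Proposition \ref{3.6}. Writing $g(\tau) = \log\!\big(\tfrac{1}{t}\dvol_{h(t)}(x)\big)$ with $t = t_0 e^{-\tau}$, formula (\ref{3.8}) gives $g'(\tau) = 1 - n\ell(\tau)$. First I would translate the two hypotheses into statements about the time averages of $n\ell-1$. The assumption that $\dvol_{h(t)}(x)$ fails to be $O(t^{1+\beta})$ for every $\beta>0$ is equivalent to $\limsup_{\tau\to\infty} g(\tau)/\tau \ge 0$, hence to $\liminf_{\tau\to\infty}\tfrac1\tau\int_0^\tau(n\ell-1)\,d\sigma \le 0$. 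On the other hand, Lemma \ref{3.4} together with asymptotic nonpositivity gives $\ell(\tau)\ge \tfrac{n}{n^2+\rho(\tau)}$ with $\rho(\tau)\to0$, so $n\ell-1\ge-\delta'(\tau)$ with $\delta'(\tau)\to0$, and a Ces\`aro argument forces $\liminf_{\tau\to\infty}\tfrac1\tau\int_0^\tau(n\ell-1)\,d\sigma \ge 0$. Combining, this $\liminf$ is $0$, and using $(n\ell-1)_+\le (n\ell-1)+\delta'$ I extract a sequence $\tau_j\to\infty$ with $\int_0^{\tau_j}(n\ell-1)_+\,d\sigma = o(\tau_j)$.

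The heart of the argument, and the step I expect to be the main obstacle, is a window version of the compactness argument of Proposition \ref{3.6}: for each $\epsilon>0$ there are $\eta>0$ and $\tau_0$ so that, with $\Lambda=\log(1/\epsilon)$, if $\tau>\tau_0$ and $\int_{\tau-\Lambda}^{\tau+\Lambda}(n\ell-1)_+\,d\sigma<\eta$ then $\mathcal E_{t_0 e^{-\tau}}$ is $\epsilon$-close to $\mathcal K$, i.e.\ $\tau\notin S_\epsilon$. I would prove the contrapositive by contradiction: given $\tau_i\to\infty$ in $S_\epsilon$ with vanishing window integrals, pass by Proposition \ref{old1.55} to a subsequential limit $\mathcal E^\infty$ of $\mathcal E_{s_i}$, where $s_i=t_0 e^{-\tau_i}$. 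Changing variables $\sigma=\tau_i-\log u$ turns the window integral into $\int_{e^{-\Lambda}}^{e^{\Lambda}}(nL_{s_i}(u,x)-1)_+\,\tfrac{du}{u}\to0$; since $L^\infty\ge\tfrac1n$ (Lemma \ref{3.4} and asymptotic nonpositivity) and $L_{s_i}(\cdot,x)\to L^\infty(\cdot,x^\infty)$ locally uniformly, this forces $L^\infty(u,x^\infty)=\tfrac1n$ for all $u\in(e^{-\Lambda},e^{\Lambda})$. Using $R^\infty\le0$ (the analog of Lemma \ref{extra}, which needs only asymptotic nonpositivity), the limit of (\ref{old1.13}) reads $\triangle L^\infty = L^\infty R^\infty + (H^\infty)^2\big(L^\infty-\tfrac1n\big)$, so at each fixed $u$ the function $w=L^\infty-\tfrac1n\ge0$ satisfies $\triangle w\le (H^\infty)^2 w$ with $(H^\infty)^2>0$. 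As $w(u,x^\infty)=0$ is an interior minimum, the strong minimum principle gives $w(u,\cdot)\equiv0$, whence $R^\infty=0$ and $|K^\infty|^2=H^2$ on the whole window; thus $\mathcal E^\infty$ is Kasner-like there and $\mathcal E_{s_i}$ is eventually $\epsilon$-close to $\mathcal K$, contradicting $\tau_i\in S_\epsilon$.

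Finally I would convert the sub-claim into the density estimate. For each unit interval $[k,k+1]\subset(\tau_0,N)$ meeting $S_\epsilon$, choose a touch point $\tau_k^*$; the sub-claim gives $\int_{\tau_k^*-\Lambda}^{\tau_k^*+\Lambda}(n\ell-1)_+\,d\sigma\ge\eta$, and these windows lie in $[k-\Lambda,k+1+\Lambda]$ with overlap multiplicity at most $m=2(\Lambda+1)+1$. Summing with bounded multiplicity yields $\eta\,F''(N)\le m\int_0^{N+\Lambda}(n\ell-1)_+\,d\sigma$, where $F''(N)$ counts the contributing intervals with $\tau_k^*>\tau_0$ and differs from $F(N)$ by at most $\tau_0+1$. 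Taking $N=N_j=\lfloor\tau_j\rfloor$ and using $(n\ell-1)_+\le n-1$ so that the tail $\int_{\tau_j}^{\tau_j+\Lambda}$ is $O(1)$, the right-hand side is $o(\tau_j)=o(N_j)$; hence $F(N_j)/N_j\to0$ and $\liminf_{N\to\infty}F(N)/N=0$. The one point demanding care is that $\epsilon$-closeness in Definition \ref{2.19} is a statement over the entire time window $(\epsilon,\epsilon^{-1})$, which is precisely why the window form of the sub-claim is needed: it is what makes the slice maximum-principle argument deliver $L^\infty\equiv\tfrac1n$ on an interval of times rather than at a single instant.
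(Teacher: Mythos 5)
Your reduction to a one-dimensional problem along the normal trajectory is sound: the translation of the volume hypothesis into $\liminf_{\tau\to\infty}\tfrac1\tau\int_0^\tau(n\ell-1)\,d\sigma\le 0$, the Ces\`aro lower bound coming from Lemma \ref{3.4} and asymptotic nonpositivity, and the final bounded-multiplicity counting are all correct, and your integral-window criterion would pleasantly subsume both of the paper's Lemmas \ref{3.12} and \ref{3.13}. But there is a genuine gap in the compactness step, precisely at the point you flagged as ``demanding care''. You fix the window half-width $\Lambda=\log(1/\epsilon)$ to match the comparison interval $(\epsilon,\epsilon^{-1})$ of Definition \ref{2.19}, and your limit argument then yields $L^\infty=\tfrac1n$, $R^\infty=0$, $|K^\infty|^2=H^2$ \emph{only} for $u\in(\epsilon,\epsilon^{-1})$. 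That is not enough for the contradiction: $S_\epsilon$ is defined via $\epsilon$-closeness to ${\mathcal K}$, and ${\mathcal K}$ consists of flows defined on all of $(0,\infty)$ satisfying $L=\tfrac1n$, $R=0$, $|K|^2=H^2$ everywhere. To conclude that ${\mathcal E}_{s_i}$ is eventually $\epsilon$-close to ${\mathcal K}$ you must produce a \emph{global} element of ${\mathcal K}$ whose restriction to $(\epsilon,\epsilon^{-1})$ is $C^{1,\alpha}$-close to ${\mathcal E}^\infty$ there. Your ${\mathcal E}^\infty$ is defined on $(0,\infty)$ by Proposition \ref{old1.55}, but is only known to be Kasner-like on the window; whether these conditions propagate off the window is a unique-continuation-type question for the Einstein flow, and nothing you wrote addresses it. (For instance, with $L\equiv\tfrac1n$ and the momentum constraint one computes $\partial_u R=\tfrac2n R^{ij}K_{ij}$, which has no sign and need not vanish once one leaves the region where $R\equiv 0$; only the scalar curvature, not the Ricci curvature, is controlled on the window.) So ``Kasner-like on $(\epsilon,\epsilon^{-1})$'' is strictly weaker than ``$\epsilon$-close to ${\mathcal K}$'', and your density bound on $S_\epsilon$ does not follow as written.

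The fix is exactly the device in the paper's Lemma \ref{3.12}: let the window width tend to infinity along the contradiction sequence. State the sub-claim as: for each $\epsilon>0$ there exist $\eta>0$, $W<\infty$ and $\tau_0$ such that $\tau>\tau_0$ and $\int_{\tau-W}^{\tau+W}(n\ell-1)_+\,d\sigma<\eta$ imply $\tau\notin S_\epsilon$; prove it by contradiction with $\eta_i\to0$ and $W_i\to\infty$. Then every fixed $u\in(0,\infty)$ eventually lies in the rescaled windows, so the limit satisfies $L^\infty(u,x^\infty)\le\tfrac1n$ for \emph{all} $u>0$, the slice-by-slice maximum-principle argument applies at every time, and ${\mathcal E}^\infty$ is an honest element of ${\mathcal K}$; pointed $C^{1,\alpha}$-convergence to an element of ${\mathcal K}$ then does give eventual $\epsilon$-closeness. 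Your final counting argument goes through verbatim with $\Lambda$ replaced by the constant $W$, since the overlap multiplicity is still bounded. With this one change your proof is correct, and is in fact a mild streamlining of the paper's: the integral hypothesis on the window replaces the paper's pointwise hypothesis on $L$ together with the spreading Lemma \ref{3.13}, and your direct Ces\`aro argument replaces the paper's contradiction argument starting from $F(N)\ge cN$.
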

  \begin{proof} We begin with a couple of lemmas.
    \begin{lemma} \label{3.12}
      Given $\epsilon > 0$, there is a $\delta > 0$ so that
      if $\tau \ge 2 \delta^{-1}$, and $L(t_0 e^{- \tau^\prime},x) \le
      \frac{1}{n} + \delta$ for $\tau^\prime \in [\tau - \delta^{-1},
        \tau + \delta^{-1}]$, then ${\mathcal E}_{t_0 e^{- \tau}}$
        is $\epsilon$-close to ${\mathcal K}$ in the pointed
        $C^{1, \alpha}$-topology.
      \end{lemma}
    \begin{proof}
      If the lemma is not true then there is a sequence
      $\delta_i \rightarrow 0$ and for each $i$,
      some $\tau_i \ge 2 \delta_i^{-1}$
      so that 
$L(t_0 e^{- \tau^\prime},x) \le
      \frac{1}{n} + \delta_i$ for $\tau^\prime \in [\tau_i - \delta_i^{-1},
        \tau_i + \delta_i^{-1}]$, but ${\mathcal E}_{t_0 e^{- \tau_i}}$
        is not $\epsilon$-close to ${\mathcal K}$ in the pointed
        $C^{1, \alpha}$-topology. After passing to a subsequence, we
        can assume that
        $\lim_{i \rightarrow \infty} {\mathcal E}_{t_0 e^{- \tau_i}} =
        {\mathcal E}^\infty$ in the pointed $C^{1,\alpha}$-topology, for some
        CMC Einstein flow ${\mathcal E}^\infty$ defined for time parameter
        $u \in (0, \infty)$. By construction,
        $L^\infty(u,x^\infty) \le \frac{1}{n}$ for all
        $u \in (0, \infty)$. As ${\mathcal E}$ has asymptotically nonpositive
        spatial scalar curvature, the proof of Lemma
        \ref{extra} shows that
        $R^\infty \le 0$. The proof of Lemma \ref{3.9} applies again, so
        ${\mathcal E}^\infty \in {\mathcal K}$. Hence
        ${\mathcal E}_{t_0 e^{- \tau_i}}$ is $\epsilon$-close to
        ${\mathcal K}$ for large $i$, which is a contradiction.
    \end{proof}

    \begin{lemma} \label{3.13}
      Given $\delta > 0$, there are $t^\prime, \mu > 0$
      with the following property.
      Suppose that $t \le t^\prime$ and $L(t, x) > \frac{1}{n} + \delta$.
      Then $L(t e^{\sigma}, x) > \frac{1}{n} + \frac12 \delta$
      for $\sigma \in [- \mu, \mu]$.
      \end{lemma}
    \begin{proof}
      If the claim is not true then
      taking $\mu_i = \frac{1}{i}$,
      there is a sequence $\{t_{i,j}\}_{j=1}^\infty$ with
      $\lim_{j \rightarrow \infty} t_{i,j} = 0$
      so that $L(t_{i,j}, x) > \frac{1}{n} + \delta$,
      but $L(t_{i,j} e^\sigma, x) \le \frac{1}{n} + \frac12 \delta$
      for some $\sigma \in [ - \mu_i, \mu_i ]$.
      Passing to a diagonal subsequence, there are sequences
      $\mu_i \rightarrow 0$ and $t_i \rightarrow 0$ so that
      $L(t_{i}, x) > \frac{1}{n} + \delta$
      but $L(t_{i} e^\sigma, x) \le \frac{1}{n} + \frac12 \delta$
      for some $\sigma \in [ - \mu_i, \mu_i ]$. Passing to a subsequence,
      we can assume that $\lim_{i \rightarrow \infty}
      {\mathcal E}_{t_i} = {\mathcal E}^\infty$ for some CMC
      Einstein flow ${\mathcal E}^\infty$. Then
      $L^\infty(1, x^\infty) \ge \frac{1}{n} + \delta$. By continuity, there
      is some $\mu^\prime > 0$ so that
      $L^\infty(e^{\sigma}, x^\infty) \ge \frac{1}{n} + \frac89 \delta$ for all
      $\sigma \in [- \mu^\prime, \mu^\prime]$. Then for large $i$,
      we know that
      $L(t_i e^{\sigma}, x) \ge \frac{1}{n} + \frac34 \delta$ for all
      $\sigma \in [- \mu^\prime, \mu^\prime]$. This is a contradiction.
      \end{proof}
    To prove the proposition, suppose that it is not true. Then
    there is some $c > 0$ so that 
    $F(N) \ge c N$ for large $N$.

Let $\delta$ be the parameter from Lemma \ref{3.12}.
For $N \in \Z^+$,
let $G(N)$ be the number of unit intervals $\{[k,k+1]\}_{k=0}^{N-1}$
    that contain a number $\sigma$ for which
    $L(t_0 e^{- \sigma}, x) > \frac{1}{n} + \delta$.
    Lemma \ref{3.12} implies that there is some $c^\prime > 0$ so that
    $G(N) \ge c^\prime N$ for large $N$.
    Lemmas \ref{3.4} and \ref{3.13}
    now imply that there is some $c^{\prime \prime} > 0$ so that
    \begin{equation}
      \int_{t}^{t_0} \left( L(v,x) - \frac{1}{n}
      \right) \frac{dv}{v} \ge c^{\prime \prime} \log \frac{t_0}{t}
    \end{equation}
    for small $t$. Equation (\ref{3.8}) then gives
      \begin{equation}
        \dvol_t(x) \le
        \left( \frac{t}{t_0} \right)^{1+nc^{\prime \prime}} \dvol_{t_0}(x)
      \end{equation}
      for small $t$,
      which contradicts the assumptions of the proposition.  This
      proves the proposition.
  \end{proof}
  
\subsection{Collapsing case} \label{subsect3.4}

We indicate how to remove the noncollapsing assumption in
Propositions \ref{3.1}, \ref{3.6} and \ref{3.10}.
Without this assumption, we can again take
pointed limits of Einstein flows but the limit flow will generally
be on an \'etale groupoid instead of a manifold.  For background
information on \'etale groupoids and Einstein flows on \'etale
groupoids, we refer to
\cite[Section 5]{Lott (2007)} and 
\cite[Section 3.1]{Lott (2018)}.
We can define $\epsilon$-closeness of Einstein flows on
\'etale groupoids in the $C^{1,\alpha}$-topology analogously to
Definition \ref{2.19}, 
using the setup of \cite[Definition 5.8]{Lott (2007)}.
The strong maximum principle applies on
the unit space of the groupoid directly.

We define ${\mathcal M}$ and ${\mathcal K}$ as in
Subsections \ref{subsect3.1} and
\ref{subsect3.2}, except
with the Einstein flows being on \'etale groupoids,
Then the proofs of Propositions \ref{3.1}, \ref{3.6} and
\ref{3.10} go through
without significant change.

\section{Examples} \label{sect4}

    \begin{example} \label{2.9}
        Consider a Lorentzian cone over a Riemannian Einstein $n$-manifold
        $(X,h_{Ein})$ with
        Einstein constant $- (n-1)$. The metric is
\begin{equation} \label{2.10}
  g = - dt^2 + t^2 h_{Ein}.
\end{equation}
The corresponding Einstein flow ${\mathcal E}$ is a type-I CMC Einstein flow.
It is noncollapsing at each $x \in X$ as $t \rightarrow 0$.
The volume density $\dvol_{h(t)}(x)$ is proportionate to $t^n$.
The pointed rescaling limit $\lim_{s \rightarrow 0} {\mathcal E}_s$ equals
${\mathcal E}$. This gives an example of Proposition \ref{3.1}.
    \end{example}

    \begin{example}
Consider the product of the previous example, in dimension $n - n^\prime$, with
  a flat torus $(T^{n^\prime}, h_{flat})$. The metric is
  \begin{equation} \label{2.11}
  g = - dt^2 + t^2 h_{Ein} + h_{flat}.
  \end{equation}
  The corresponding Einstein flow ${\mathcal E}$ is a type-I CMC Einstein flow
  and has nonpositive spatial scalar curvature.
It is noncollapsing at each $x \in X$ as $t \rightarrow 0$.
The volume density $\dvol_{h(t)}(x)$ is proportionate to $t^{n-n^\prime}$.

The pointed rescaling limit $\lim_{s \rightarrow 0} {\mathcal E}_s$ is the
product of the Einstein flow of the previous example with flat $\R^{n^\prime}$.
If $n - n^\prime = 1$ then we get an example of Proposition \ref{3.6}.
    \end{example}

    \begin{example} \label{4.5}
      Consider a Kasner solution on a flat
      $n$-manifold.  After possibly passing
  to a cover of $X$, the metric is
  \begin{equation} \label{2.13}
    g = - \: \frac{1}{n^2} dt^2 + (d \vec{x})^T t^{2M} d\vec{x}.
  \end{equation}
  Here $M$ is a symmetric $(n \times n)$-matrix with $\Tr(M) = \Tr(M^2) = 1$.
  We have written the metric so that $t \: = \: - \: \frac{n}{H}$. Then
  \begin{equation} \label{2.14}
    L = \frac{1}{n}, \: \: \: \: \: \:
    R = 0, \: \: \: \: \: \:
    |K|^2 = H^2 = \frac{n^2}{t^2}.
  \end{equation}
  The corresponding Einstein flow ${\mathcal E}$ is a type-I CMC Einstein flow
  and has vanishing spatial scalar curvature.
It is noncollapsing at each $x \in X$ as $t \rightarrow 0$.
The volume density $\dvol_{h(t)}(x)$ is proportionate to $t$.

The pointed rescaling limit $\lim_{s \rightarrow 0} {\mathcal E}_s$ is a
Kasner flow, with the same matrix $M$. It lives on $\R^n$ provided that
$M$ does not have $1$ as an eigenvalue.
This gives an example of Proposition \ref{3.6}.  
    \end{example}

    \begin{example} \label{4.8}
Consider a Kantowski-Sachs solution with $X$ diffeomorphic to
  $S^2 \times S^1$. The metric is a $\Z$-quotient of the interior of the
  event horizon in a Schwarzschild solution, after switching the
  usual $t$ and $r$ variables:
  \begin{equation} \label{4.9}
    g = - \:
    \frac{1}{\frac{2m}{t}-1} dt^2 + \left( \frac{2m}{t} - 1\right) dr^2
    + t^2 g_{S^2}.
  \end{equation}
  Here $t \in (0, 2m)$ and the $\Z$-quotienting is in the $r$-variable.
  The corresponding Einstein flow ${\mathcal E}$ is a type-I CMC Einstein flow,
  although the parameter $t$ in (\ref{4.9}) is not the Hubble time $t_H$.
  The relation is that for small time, $t \sim t_H^{\frac23}$.
  The spatial slices have scalar curvature $R(t) = \frac{2}{t^2}$, which
  goes like $t_H^{- \: \frac{4}{3}}$. It follows that in terms of the
    Hubble time,
    $\int_0^{t_0} v^2 R(v) \frac{dv}{v} < \infty$.
  The Einstein flow is noncollapsing at each $x \in X$ as $t_H \rightarrow 0$.
  The volume density $\dvol_{h(t_H)}(x)$ goes like $t_H$.
  This gives an example of Proposition \ref{3.6}.
The pointed rescaling limit $\lim_{s \rightarrow 0} {\mathcal E}_s$ is the
Kasner flow on $\R^3$ with a diagonal matrix $M$ whose diagonal entries are
$\left\{ \frac23, \frac23, - \frac13 \right\}$.
     \end{example}

    \begin{example} \label{4.10}
      Consider an Einstein flow on $S^3 \cong \SU(2)$
      that is left $\SU(2)$-invariant
      and right $\U(1)$-invariant. This is the Taub part of the
      Taub-NUT solution
      \cite[Section 9.2.7]{Ellis-Wainwright (1997)}.
      The corresponding Einstein flow is a type-I CMC Einstein flow.
      The spatial slices have positive scalar curvature but
      one can check that
    $\int_0^{t_0} v^2 R(v) \frac{dv}{v} < \infty$.      
  The Einstein flow is noncollapsing at each $x \in X$ as $t \rightarrow 0$.
  The volume density $\dvol_{h(t_H)}(x)$ goes like $t$.
  This gives an example of Proposition \ref{3.6}.
  Geometrically, before rescaling, the Gromov-Hausdorff limit
  as $t \rightarrow 0$ is a
  $2$-sphere and the circle fibers have length that goes like $t$.
  The pointed rescaling limit $\lim_{s \rightarrow 0} {\mathcal E}_s$ is a
  Kasner flow on $S^1 \times \R^2$ with a diagonal matrix $M$
  whose diagonal entries are
$\left\{ 1,0,0 \right\}$.
\end{example}

\begin{example} \label{4.11}
  Let ${\mathcal E}$ be a Bianchi-VIII NUT solution on a circle bundle
  over a higher genus surface
  \cite[Section 9.2.6]{Ellis-Wainwright (1997)}.
The corresponding Einstein flow is a type-I CMC Einstein flow.
The spatial slices have negative scalar curvature.
  The Einstein flow is noncollapsing at each $x \in X$ as $t \rightarrow 0$.
  The volume density $\dvol_{h(t_H)}(x)$ goes like $t$.
  This gives an example of Proposition \ref{3.6}.
  Geometrically, before rescaling, the Gromov-Hausdorff limit
  as $t \rightarrow 0$ is the surface
  and the circle fibers have length that goes like $t$.
  The pointed rescaling limit $\lim_{s \rightarrow 0} {\mathcal E}_s$ is a
  Kasner flow on $S^1 \times \R^2$ with a diagonal matrix $M$
  whose diagonal entries are
$\left\{ 1,0,0 \right\}$.
\end{example}    

\begin{example} \label{4.12}
  Consider a four dimensional polarized Gowdy spacetime with spatial slices
  diffeomorphic to $T^3$ \cite{Isenberg-Moncrief (1990)}.  The metric
  can be written
  \begin{equation} \label{4.13}
    g = e^{2a} (- dt^2 + d\theta^2) + t \left( e^W dx^2 + e^{-W} dy^2
    \right).  
    \end{equation}
  Here $a$ and $W$ are functions of $t$ and $\theta$.
  Define $\tau$ by $t = e^{- \tau}$, so one approaches the singularity as
  $\tau \rightarrow \infty$. 
  Asymptotics as $\tau \rightarrow \infty$ were derived in
  \cite{Isenberg-Moncrief (1990)}.
  In particular,
  \begin{align}
    a(\tau, \theta) & \sim \frac{1 - \pi^2(\theta)}{4} (\tau - \tau_0) +
    \alpha(\theta) + \ldots, \\
    W(\tau, \theta) & \sim \pi(\theta) (\tau - \tau_0) + \omega(\theta) +
    \ldots \notag
    \end{align}
  for appropriate functions $\alpha, \pi, \omega$ of $\theta$.

  As the singularity is a crushing
  singularity, there is a CMC foliation, but the metric 
  (\ref{4.13}) is not in CMC form. Because of this, one cannot read off
  whether the hypotheses of Proposition \ref{3.6} are fulfilled. Nevertheless,
  one can say the following.  First, from the barrier argument for
  CMC hypersurfaces \cite{Gerhardt (1983)}, one can show that the
  Hubble time $t_H$ goes like $e^{- \: \frac{\pi^2(\theta) + 3}{4} \tau}$.
  Then from \cite[Theorem IV.1]{Isenberg-Moncrief (1990)}, the Kretschmann
  scalar satisfies
\begin{equation}
  \left| R^{\alpha \beta \gamma \delta}
  R_{\alpha \beta \gamma \delta} \right| \le \const t_H^{-4},
\end{equation}
which is consistent with type-I asymptotics. Next, we can estimate the
spacetime volume $V(t_H)$
between the singularity and the CMC slice with
$H = - \: \frac{3}{t_H}$. One finds that $V(t_H)$ goes like $t_H^2$,
which is consistent with $\dvol_{h(t_H)}$ going like $t_H$.

Fixing $\theta$, $x$ and $y$, the geometry as $\tau \rightarrow \infty$
approaches a Kasner geometry on $\R^3$ with a diagonal matrix $M$
whose diagonal entries are
$\left\{ \frac{\pi^2(\theta) - 1}{\pi^2(\theta) + 3},
\frac{2 - 2 \pi(\theta)}{\pi^2(\theta) + 3},
\frac{2 + 2 \pi(\theta)}{\pi^2(\theta) + 3} \right\}$.
\end{example}

\begin{example} \label{2.55}
  Consider an Einstein flow of Bianchi type IX
\cite[Section 6.4]{Ellis-Wainwright (1997)}. It is a 
homogeneous CMC Einstein flow on $S^3$ with a crushing singularity.
The Einstein flow is left $\SU(2)$-invariant.  The case when it is
right $\U(1)$-invariant was already considered in Example \ref{4.10}, so
we assume that the flow is not right $\U(1)$-invariant, i.e. it is of
Mixmaster type.

\begin{proposition} \label{4.17}
  A Bianchi IX Mixmaster flow is type-I and has asymptotically nonpositive
  spatial scalar curvature.
  \end{proposition}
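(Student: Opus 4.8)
The plan is to reduce both assertions to the behavior of the expansion-normalized (Wainwright--Hsu) variables for Bianchi IX and then to quote the rigorous Mixmaster analysis. Fix a left-invariant frame on $\SU(2)$ diagonalizing the flow, let $n_1, n_2, n_3 > 0$ be the structure-constant eigenvalues in the orthonormal coframe, and set $N_a = n_a/|H|$ together with the normalized shear $\Sigma_\pm$. Since $n = 3$ and $t = - \frac{3}{H}$, one has $t^{-2} = \frac{1}{9} H^2$, so the type-I inequality $|\Rm|_T \le C t^{-2}$ is equivalent to $|\Rm|_T \le C' H^2$, and asymptotic nonpositivity $\limsup_{t \to 0} t^2 R \le 0$ is equivalent to $\limsup_{t \to 0} R/H^2 \le 0$. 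First I would record the Bianchi IX scalar curvature in normalized form,
\begin{equation}
  \frac{R}{H^2} = c_0 \left[ (N_1 N_2 + N_2 N_3 + N_3 N_1) - \tfrac{1}{2}(N_1^2 + N_2^2 + N_3^2) \right]
\end{equation}
for a positive constant $c_0$, noting that (because all $n_a > 0$) the off-diagonal products are the only positive contributions, so that $R > 0$ forces the $N_a$ to be comparable (the near-round regime), whereas $R \le 0$ on the closure of the Bianchi I and Bianchi II configurations, where at most one $N_a$ is nonzero.

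Next I would carry out the reduction for type-I. Via the Gauss--Codazzi--Mainardi equations, every component of the spacetime curvature in the adapted orthonormal frame equals $H^2$ times a fixed polynomial in $\Sigma_\pm$ and $N_a$; hence $|\Rm|_T \le C' H^2$ is equivalent to boundedness of $(\Sigma_\pm, N_a)$ as $t \to 0$. The Hamiltonian constraint (\ref{old1.2}) already gives the one-sided bound $|K^0|^2 = R + \frac{2}{3} H^2 \ge 0$, i.e. $R \ge - \frac{2}{3} H^2$, so the only missing ingredient is an upper bound $\sum_a N_a^2 \le \const$; combined with the elementary estimate $R \le \frac{1}{2} \sum_a n_a^2$ and the constraint, this also bounds $\Sigma_\pm$. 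Thus type-I reduces entirely to the boundedness of the normalized structure constants.

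The substance of the proof is therefore two facts about the contracting Bianchi IX orbit, both supplied by the Mixmaster analysis (\cite{Ringstrom (2001)} and the dynamical-systems framework of \cite[Section 6.4]{Ellis-Wainwright (1997)}): (i) the orbit has compact closure in the normalized state space as $t \to 0$, which gives $\sum_a N_a^2 \le \const$ and hence type-I; and (ii) the $\alpha$-limit set is contained in the Mixmaster attractor, the closure of the union of the vacuum Bianchi I and Bianchi II orbits, along which the pairwise products satisfy $N_a N_b \to 0$ for $a \ne b$. Granting (ii), the displayed formula yields
\begin{equation}
  \limsup_{t \to 0} \frac{R}{H^2} \le c_0 \left( \lim_{t \to 0} (N_1 N_2 + N_2 N_3 + N_3 N_1) - \liminf_{t \to 0} \tfrac{1}{2} \sum_{a} N_a^2 \right) \le 0,
\end{equation}
which is the asymptotic nonpositivity. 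Here the Mixmaster (non-$\U(1)$-invariant) hypothesis is precisely what guarantees the generic attractor behavior, the Taub--NUT case being treated separately in Example \ref{4.10}.

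I expect the main obstacle to be item (ii): controlling the sign of $R$ as $t \to 0$ amounts to showing that the solution never lingers near an isotropic configuration close to the singularity, so that the positive cross-curvature products $N_a N_b$ decay. This is exactly the content of the BKL picture made rigorous, and I would quote it rather than reprove it. The boundedness in (i) is comparatively routine within the same framework, and the reduction via Gauss--Codazzi, the constraint, and the quadratic estimate on $R/H^2$ is bookkeeping.
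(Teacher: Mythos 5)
Your proposal is correct and takes essentially the same route as the paper: both reduce the two claims to statements about the expansion-normalized Wainwright--Hsu variables, use the same formula for $R/H^2$, and quote Ringstr\"om's rigorous Bianchi IX results that these variables remain bounded toward the singularity and that the flow approaches the attractor on which the cross terms $N_a N_b$ ($a \neq b$) vanish. The only cosmetic difference is that you express the spacetime curvature through the Gauss--Codazzi equations, whereas the paper uses the vanishing of the spacetime Ricci tensor to reduce to the electric and magnetic parts of the Weyl tensor; both give the same conclusion that $|\Rm|_T$ is $H^2$ times a polynomial in the bounded normalized variables.
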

\begin{proof}
  To describe the ODE of the Bianchi IX flow, we use the normalizations of
  \cite{Ringstrom (2001)}; see \cite[Appendix]{Ringstrom (2001)}.
  The metric can be written as
  \begin{equation}
    g = - dt^2 + h(t).
  \end{equation}
  (The $t$ here is not the Hubble time.)
Put $\theta = - H$.
A new dimensionless variable
$\tau$ (not related to the $\tau$ of Proposition \ref{3.10}) is
    defined by
    \begin{equation}
      \frac{dt}{d\tau} = \frac{3}{\theta}.
    \end{equation}
    We normalize so that
      $\tau = 0$ corresponds to $t = t_0$.
    Then
    \begin{equation}
g = - \left( \frac{3}{\theta} \right)^2 d\tau^2 + h(\tau).      
    \end{equation}
    That is, the lapse function is
\begin{equation} \label{lapse}
  L(\tau) = \frac{3}{\theta}.
  \end{equation}
    Approaching the singularity corresponds to $\tau \rightarrow - \infty$.
    
As the spacetime Ricci tensor
  vanishes, the curvature tensor is determined by the spacetime
  Weyl curvature.
  Since $\dim(X) = 3$, the Weyl curvature is expressed in terms of
  ``electric'' and ''magnetic''
  tensors \cite[Section 1.1.3]{Ellis-Wainwright (1997)}.
  After normalization by the Hubble time, the tensor components
  can be written as
  polynomials in the Wainwright-Hsu variables
  $\Sigma_+, \Sigma_-, N_+, N_-, N_1$ \cite[(6.37)]{Ellis-Wainwright (1997)}.
    Hence the Einstein flow will be type-I provided that these
    variables remain bounded as one approaches the singularity.
    From
    \cite{Ringstrom (2001)}, this is the case.

    The normalized spatial scalar curvature is 
\begin{equation}
  \frac{R}{H^2} = - \: \frac12
  \left[ N_1^2 + N_2^2 + N_3^2 - 2 (N_1 N_2 + N_2 N_3 + N_3 N_1)
  \right].
\end{equation}
Going toward the singularity,
the flow approaches an attractor where two of the $N_i$'s vanish
\cite{Ringstrom (2001)}.  Hence
the Einstein flow has asymptotically nonpositive spatial scalar curvature.
\end{proof}

We now restrict to a certain class of Mixmaster flows.
The Kasner circle is $\{(\Sigma_+, \Sigma_-) \: : \:
\Sigma_+^2 + \Sigma_-^2 = 1\}$.
The Kasner map is a certain degree two map of the Kasner circle to
itself \cite[Section 6.4.1]{Ellis-Wainwright (1997)}.
Given a periodic orbit of the Kasner map that is not a fixed point, there
is a heteroclinic cycle of the ODE that consists of the periodic points on
the Kasner circle, joined by Taub type-II Einstein
flows that asymptotically approach
two adjacent points in the orbit as time goes to $\pm \infty$.
There is a family of Mixmaster flows that asymptotically
approach the heteroclinic cycle as $\tau \rightarrow - \infty$
\cite{Liebscher-Harterich-Webster-Georgi (2011)}.

\begin{proposition} \label{4.23}
These Mixmaster flows satisfy the assumptions of Proposition \ref{3.10}.  
  \end{proposition}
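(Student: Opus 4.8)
The plan is to verify, for these Mixmaster flows, each of the three hypotheses of Proposition \ref{3.10}: that the flow is type-I with asymptotically nonpositive spatial scalar curvature, that it is noncollapsing at a (hence any) point $x$, and that $\dvol_{h(t)}(x)$ fails to be $O(t^{1+\beta})$ for every $\beta > 0$. The first hypothesis is exactly Proposition \ref{4.17}, which applies to all Bianchi IX Mixmaster flows, so nothing new is needed there. For noncollapsing I would argue in the left-invariant frame: during a Kasner epoch the frame axes have lengths $\sim t^{p_i}$ in proper time with $\sum_i p_i = 1$, so the shortest axis has length $\sim t^{p_{\max}}$ with $p_{\max} \le 1$ and is therefore bounded below by a fixed multiple of the Hubble time; since the Bianchi II transitions occupy a negligible fraction of the time and do not shrink the shortest axis past this scale, the rescaled balls $B_{h(t)}(x,t)$ carry volume $\ge v_0 t^3$. (Alternatively one may drop this step and invoke the groupoid formulation of Subsection \ref{subsect3.4}.)

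The heart of the argument is the volume bound, for which I would use the pointwise monotonicity formula (\ref{3.8}). In terms of the expansion-normalized Wainwright--Hsu variables $\Sigma_\pm, N_i$ and the shear $\Sigma^2 = \Sigma_+^2 + \Sigma_-^2$, a vacuum Bianchi flow has deceleration $q = 2\Sigma^2$, so the Hubble-time lapse is $L = (1 + 2\Sigma^2)^{-1}$ and $\frac{dv}{v} = (1 + 2\Sigma^2)\,d\tau$, where $v$ denotes Hubble time and $\tau$ is the dimensionless time introduced in the proof of Proposition \ref{4.17}. Substituting into (\ref{3.8}) with $n = 3$ collapses the integrand $3(L - \frac13)\frac{dv}{v}$ to $2(1-\Sigma^2)\,d\tau$, giving the clean identity
\begin{equation}
\log \frac{t_0^{-1}\dvol_{h(t_0)}(x)}{t^{-1}\dvol_{h(t)}(x)} = 2 \int_{\tau(t)}^{0}(1 - \Sigma^2)\,d\tau',
\end{equation}
with $\tau(t) \to - \infty$ as $t \to 0$. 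Combining this with $\log(t_0/t) = \int_{\tau(t)}^0 (1 + 2\Sigma^2)\,d\tau'$, the quantity $\dvol_{h(t)}(x)\,t^{-(1+\beta)}$ is, up to a fixed constant, the exponential of
\begin{equation}
3\beta\,|\tau(t)| - (2\beta + 2)\int_{\tau(t)}^{0}(1 - \Sigma^2)\,d\tau'.
\end{equation}
Thus failing to be $O(t^{1+\beta})$ for every $\beta > 0$ reduces to showing this exponent is unbounded above as $t \to 0$.

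The remaining, and main, point is the dynamical estimate
\begin{equation}
\lim_{\tau \to - \infty} \frac{1}{|\tau|} \int_{\tau}^{0} (1 - \Sigma^2)\,d\tau' = 0,
\end{equation}
that is, the flow spends asymptotically all of its $\tau$-time with $\Sigma^2$ near $1$, on the Kasner circle. Granting this, the exponent above equals $(3\beta + o(1))|\tau(t)| \to + \infty$ for each fixed $\beta > 0$, which is precisely the desired failure of the $O(t^{1+\beta})$ bound; together with noncollapsing and the type-I property this lets Proposition \ref{3.10} apply. I expect the estimate on this defect integral to be the hard part: it requires that, as the trajectory converges to the heteroclinic cycle, the residence times near the Kasner fixed points grow without bound while each Bianchi II transition contributes only a bounded amount to $\int (1 - \Sigma^2)\,d\tau'$, so that the number of transitions in $[\tau, 0]$ is $o(|\tau|)$. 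This quantitative control on the approach to the cycle is exactly what the analysis of \cite{Liebscher-Harterich-Webster-Georgi (2011)} supplies, and I would extract it from there rather than reprove it.
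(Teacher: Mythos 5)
Your treatment of the volume hypothesis is correct and is essentially the paper's own argument in different packaging: your lapse identity $L = (1+2\Sigma^2)^{-1}$ and the resulting reduction of (\ref{3.8}) to the statement that $\log\bigl(t^{-1}\dvol_{h(t)}(x)\bigr)$ differs from a constant by $2\int_{\tau(t)}^0 (1-\Sigma^2)\,d\tau'$ is the same computation that the paper performs by integrating (\ref{4.21}) against (\ref{4.24}), and both routes reduce the failure of the $O(t^{1+\beta})$ bound to the sublinearity in $|\tau|$ of $\int_\tau^0(1-\Sigma^2)\,d\tau'$, which both you and the paper extract from \cite{Liebscher-Harterich-Webster-Georgi (2011)} rather than reprove. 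Invoking Proposition \ref{4.17} for the type-I and asymptotic scalar curvature hypotheses also matches the paper.

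The genuine gap is the noncollapsing hypothesis, which in the paper occupies more than half of the proof and is not a routine verification. Your argument --- frame axes $\sim t^{p_i}$ with $p_{\max}\le 1$ during a Kasner epoch, plus the assertion that the Bianchi II transitions ``do not shrink the shortest axis past this scale'' --- assumes exactly what must be proved: the single-epoch scaling heuristic gives no control on the multiplicative accumulation of errors over infinitely many epochs and transitions, and the assertion about transitions is the whole content of the claim. The point your sketch never engages is the marginal case $p_{\max}=1$: the paper computes $\frac{d(H^2h)}{d\tau}$ explicitly and shows that the regions of the $(\Sigma_+,\Sigma_-)$-plane in which a coefficient of the normalized metric can contract (going toward the singularity) are three closed disks that meet the Kasner circle \emph{only} at the Taub points, i.e.\ at the flat Kasner geometries with exponents $(1,0,0)$. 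Noncollapsing therefore hinges on the fact that the chosen heteroclinic cycle avoids the Taub points, so that near the Kasner circle --- where by \cite{Liebscher-Harterich-Webster-Georgi (2011)} almost all of the $\tau$-time is spent --- every coefficient of $H^2h$ expands backward in time, with only bounded contraction during transitions; the paper then converts a hypothetical collapse into homotopically nontrivial loops of rescaled length tending to zero via \cite{Cheeger-Fukaya-Gromov (1992)}, and transports them by this net expansion to arbitrarily short homotopically nontrivial loops in the fixed geometry at $\tau = 0$, a contradiction. Nothing in your proposal substitutes for these steps. Finally, your fallback of invoking the groupoid formulation of Subsection \ref{subsect3.4} does not prove the proposition as stated: the statement is precisely that the hypotheses of Proposition \ref{3.10} hold, and noncollapsing at $x$ is one of those hypotheses; the groupoid route replaces Proposition \ref{3.10} by a different statement rather than verifying its assumptions.
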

\begin{proof}
  We first show that $\dvol_{h(t_H)}(x)$ fails to be
  $O \left( t_H^{1+\beta} \right)$ for any $\beta > 0$, where the
  $t_H$ denotes the Hubble time. The variable $\tau$ is defined in a way
  that there is a simple dependence of the volume form on $\tau$.  Namely,
\begin{equation} \label{4.24}
  \dvol_{h(\tau)}(x) = e^{3 \tau} \dvol_{h(0)}(x).
\end{equation}
To see this, equation (\ref{old1.4}) gives
\begin{equation}
  \frac{d}{d\tau} \dvol_{h(\tau)}(x) = -L H \dvol_{h(\tau)}(x).
  \end{equation}
Then (\ref{lapse}) implies
\begin{equation}
  \frac{d}{d\tau} \dvol_{h(\tau)}(x) = 3 \dvol_{h(\tau)}(x),
\end{equation}
from which
  (\ref{4.24}) follows.

  Using (\ref{4.24}), to see how $\dvol_{h(\tau)}(x)$ depends on the
  Hubble time $t_H$, since $t_H = \frac{3}{\theta}$
  it suffices to see how $\theta$ depends on $\tau$.
  One has
    \begin{equation} \label{4.21}
      \frac{d\theta}{d\tau} =
      - \left( 1 + 2 \Sigma_+^2 + 2 \Sigma_-^2 \right) \theta,
      \end{equation}
so
  \begin{equation}
    \log \theta(\tau) - \log \theta(0) = -
    3 \tau + 2 \int_{\tau}^0 \left( \Sigma_+^2 + \Sigma_-^2 - 1
    \right)(u) \: du.
        \end{equation}
  From \cite[Sections 3 and 4]{Liebscher-Harterich-Webster-Georgi (2011)}, as
  $\tau \rightarrow - \infty$, the trajectory in the $(\Sigma_+, \Sigma_-)$-plane will spend almost all of its time near the periodic orbit on the Kasner
  circle. Consequently, $\int_{\tau}^0 \left( \Sigma_+^2 + \Sigma_-^2 - 1
  \right)(u) \: du$ will be sublinear in $|\tau|$ as
    $\tau \rightarrow - \infty$. Thus to leading order,
    $\theta(\tau)$ will go like $e^{- 3 \tau}$.
    Then for any $\beta > 0$, it follows that $\dvol_{h(\tau)}(x)$
    will fail to be $O \left( t_H^{1+\beta} \right)$ as
    $\tau \rightarrow - \infty$, i.e. as $t_H \rightarrow 0$.

    We now show that the Einstein flow is noncollapsing at $x$ as
    $t_H \rightarrow 0$.
    Because the Einstein flow is type-I, the rescaling
    $t_H^{-2} h(t_H)$ of the metric at Hubble time $t_H$ 
    has a double sided curvature bound that is independent of $t_H$.
    Suppose that the Einstein flow is collapsing at $x$, as witnessed by 
    a sequence
    of times $\{t_H^j\}_{j=1}^\infty$ going to zero. 
    From the theory of bounded curvature collapse
    \cite{Cheeger-Fukaya-Gromov (1992)}, there is some constant $a > 0$
    so that for large $j$, there is a loop at $x$ that is
    homotopically nontrivial in the metric ball
    $B_{(t_H^j)^{-2} h(t_H^j)}(x,a)$, with the
    length of the loop (with respect to $(t_H^j)^{-2} h(t_H^j)$) going to
    zero as $j \rightarrow \infty$.

    On the other hand, the evolution of $h$ is given by
    (\ref{old1.4}).  Let $\{\sigma^i\}_{i=1}^3$ be the coframe to the
    orthonormal frame $\{e_i\}_{i=1}^3$ used in deriving the ODE,
    so
    \begin{equation}
      h = \sigma^1 \otimes \sigma^1 + \sigma^2 \otimes \sigma^2 +
      \sigma^3 \otimes \sigma^3.
    \end{equation}
    Then
    \begin{align}
      K = & \frac{H}{3} (1 - 2 \Sigma_+) \sigma^1 \otimes \sigma^1 +
      H \left( \frac13 + \frac13 \Sigma_+ + \frac{1}{\sqrt{3}} \Sigma_- \right)
      \sigma^2 \otimes \sigma^2 + \\      
      &
      H \left( \frac13 + \frac13 \Sigma_+ - \frac{1}{\sqrt{3}} \Sigma_- \right)
      \sigma^3 \otimes \sigma^3. \notag
    \end{align}
    Using (\ref{old1.4}) and (\ref{4.21}), one obtains
    \begin{align}
      \frac{d(H^2 h)}{d\tau} = &
      (-4\Sigma_+ - 4 \Sigma_+^2 - 4 \Sigma_-^2) H^2 \sigma^1 \otimes \sigma^1 + \\
      & (2\Sigma_+ + 2 \sqrt{3} \Sigma_- - 4 \Sigma_+^2 - 4 \Sigma_-^2) H^2 \sigma^2 \otimes \sigma^2 + \notag \\
      & (2\Sigma_+ - 2 \sqrt{3} \Sigma_- - 4 \Sigma_+^2 - 4 \Sigma_-^2) H^2 \sigma^2 \otimes \sigma^2. \notag
    \end{align}
    The region
in the $(\Sigma_+, \Sigma_-)$-plane
    where $-4\Sigma_+ - 4 \Sigma_+^2 - 4 \Sigma_-^2 \ge 0$ is
    the closed disk with center $\left( - \: \frac12, 0 \right)$ and radius
    $\frac12$. The regions where the other coefficients are nonnegative
    are the rotations of this disk around the origin by $\frac{2 \pi}{3}$
    and $\frac{4 \pi}{3}$ radians.  In particular, these three disks only
    meet the Kasner circle at the Taub points $\left\{
    e^{\frac{i \pi}{3}},e^{i \pi}, e^{\frac{5 \pi}{3}} \right\}$, which
    correspond to flat Kasner spacetimes.  Since the heteroclinic cycle
    avoids these points, as $\tau \rightarrow - \infty$
    the normalized metric $\theta^2 h$ will be
    greatly expanded during the time spent
    near the Kasner circle and will have
    bounded contraction the rest of the time,
    again using
    \cite[Sections 3 and 4]{Liebscher-Harterich-Webster-Georgi (2011)}.
    Hence there is a large overall expansion and so at
    the fixed time $\tau = 0$,
    there is a sequence of loops at $x$ that are homotopically nontrivial
    and whose lengths go to zero. This is a contradiction. 
\end{proof}

Because of the homogeneity, in this case
the Kasner-like regions in the conclusion
of Proposition \ref{3.10} are standard Kasner geometries in the sense of
Example \ref{4.5}.
It seems plausible that the hypotheses of Proposition \ref{3.10}
are also satisfied for the Mixmaster
spacetimes considered in
\cite{Beguin (2010),Brehm (2016)}.
\end{example}

\begin{example} \label{2.79}
  Consider a locally homogeneous Einstein flow of Bianchi type VIII.
  The spatial geometry is a quotient of
  $\widetilde{\SL(2, \R)}$. The spatial scalar curvature is
  nonpositive.

  The lifted geometry on $\widetilde{\SL(2, \R)}$ is
  left $\widetilde{\SL(2, \R)}$-invariant.  The case when it is
  right $\widetilde{\SO(2)}$-invariant was essentially considered in
  Example \ref{4.11}, so we assume that the Einstein flow is not
  right $\widetilde{\SO(2)}$-invariant.

  For a generic set of initial conditions, a
    Bianchi VIII solution converges as $\tau \rightarrow - \infty$ to
   the Mixmaster attractor \cite{Brehm (2016)}.  
   Then Proposition \ref{4.17} extends to these solutions.
   As mentioned in
   \cite{Liebscher-Harterich-Webster-Georgi (2011)}, the results of
   that paper extend to the construction of Bianchi VIII solutions that,
   as $\tau \rightarrow - \infty$, approach a
   heteroclinic cycle coming from a periodic orbit of the Kasner map.
   (These solutions are in the generic set of
   \cite{Brehm (2016)}.) Then Proposition \ref{4.23} extends to such
   solutions.
\end{example}

\end{document}